\documentclass[a4paper,12pt]{article}
\usepackage{geometry}
\usepackage{amssymb}
\usepackage{xypic}
\usepackage[all,2cell]{xy}
\usepackage{graphics}
\usepackage{amsmath}
\usepackage{fullpage}
\usepackage{theorem}
\usepackage{amstext}
\usepackage{mathrsfs}
\usepackage{calligra}
\usepackage{xcolor, soul}
\sethlcolor{yellow}

\newtheorem{theorem}{Theorem}[section]

\newtheorem{proposition}{Proposition}[section]

\newtheorem{lemma}{Lemma}[section]
\theorembodyfont{\upshape}

\newtheorem{definition}{Definition}[section]

\newenvironment{proof}[1][Proof]{\textbf{#1.} }{\ \rule{0.5em}{0.5em}}
\begin{document}
	
		\title{Fuzzy semigroups via semigroups}
	\author{
		Anjeza Krakulli \\
		Universiteti Aleksand\"er Moisiu \\
		Fakulteti i Teknologjis\"e dhe Informacionit \\
		Departamenti i Matematik\"es, Durr\"es \\
		anjeza.krakulli@gmail.com \\
		Elton Pasku \\
		Universiteti i Tiran\"es \\
		Fakulteti i Shkencave Natyrore \\
		Departamenti i Matematik\"es, Tiran\"e \\
		elton.pasku@fshn.edu.al \\
	}
	
	\date{}
	
	\maketitle

\begin{abstract}
The theory of fuzzy semigroups is a branch of mathematics that arose in early 90's as an effort to characterize properties of semigroups by the properties of their fuzzy subsystems which include, fuzzy subsemigroups and their alike, fuzzy one (resp. two) sided ideals, fuzzy quasi-ideals, fuzzy bi-ideals etc. To be more precise, a fuzzy subsemigroup of a given semigroup $(S,\cdot)$ is just a $\wedge$-prehomomorphism $f$ of $(S,\cdot)$ to $([0,1],\wedge)$. Variations of this, which correspond to the other before mentioned fuzzy subsystems, can be obtained by imposing certain properties to $f$. It turns out from the work of Kuroki, Mordeson, Malik and that of many of their descendants, that fuzzy subsystems play a similar role to the structure theory of semigroups that play their non fuzzy analogues. The aim of the present paper is to show that this similarity is not coincidental. As a first step to this, we prove that there is a 1-1 correspondence between fuzzy subsemigroups of $S$ and subsemigroups of a certain type of $S\times I$. Restricted to fuzzy one sided ideals, this correspondence identifies the above fuzzy subsystems to their analogues of $S\times I$. Using these identifications, we prove that the characterization of the regularity of semigroups in terms of fuzzy one sided ideals and fuzzy quasi-ideals can be obtained as an implication of the corresponding non fuzzy analogue. In a further step, we give new characterizations of semilattices of left simple semigroups in terms of left simple fuzzy subsemigroups, and of completely regualr semigroups in terms of completely simple fuzzy subsemigroups. Both, left simple fuzzy subsemigroups, and completely simple fuzzy subsemigroups are defined here for the first time, and the corresponding characterizations generalize well known characterizations of the corresponding semigroups.\\
{\it Key words and phrases}:  Fuzzy subsemigroup, fuzzy one sided ideal, fuzzy quasi-ideal, regular semigroup. 
\end{abstract}

\section{Introduction}

Given a semigroup $(S,\cdot)$, a fuzzy subsemigroup of $S$ is a function $f:S \rightarrow [0,1]$ such that for all $a,b \in S$, $f(ab) \geq f(a) \wedge f(b)$. Thus, a fuzzy subsemigroup is just a prehomomorphism $f:(S, \cdot) \rightarrow ([0,1], \wedge)$ of semigroups. A fuzzy subsemigroup as above will be called a fuzzy left ideal (resp. right ideal) of $S$ if for all $a,b \in S$, $f(ab) \geq  f(b)$ (resp. $f(ab) \geq f(a) $). The composite of two fuzzy subsets $f,g: S \rightarrow [0,1]$ is given by
$$(f \circ g)(a)=\left\{ \begin{array}{ccc}
	{\vee}_{a=bc}(f(b) \wedge g(c)) & if & \exists x,y \in S, a=xy \\
	0 & if & \forall x,y \in S, a \neq xy.
\end{array}\right.$$
The operation $\circ$ is associative. The definition of $\circ$ is needed in defining fuzzy quasi-ideals which are maps $q:S \rightarrow [0,1]$ such that $q \circ S \cap S \circ q \subseteq q$. In more details, the inclusion here means that for every $a \in S$, $(q \circ S)(a) \wedge (S \circ q)(a) \leq q(a)$, and $S$ is the fuzzy subsemigroup which maps every $a \in S$ to 1. More generally, for every subsemigroup $A$ of $S$, the characteristic map $\chi_{A}$, defined by
$$\chi_{A}(a)=\left\{ \begin{array}{ccc}
1 & if & a \in A\\
0 & if & a \notin A
\end{array}\right.$$
is proved to be a fuzzy subsemigroup of $S$. The above notions and other ones that are needed to understand the rest of the paper can be found mostly in the monograph \cite{FS}. Necessary concepts from semigroup theory are found in \cite{JMH} and \cite{OS}. From \cite{OS}, we are presenting below theorem 9.3 whose fuzzy counterpart is proved partially in \cite{FS}. We prove here its fuzzy counterpart theorem \ref{osreg}, including the part already proved in \cite{FS}, not in a direct fashion as in \cite{FS}, but as an implication of theorem 9.3 after having established a 1-1 correspondence between fuzzy one sided ideals of a semigroup $S$ on the one hand, and one sided ideals of $S \times [0,1]$ on the other hand. Theorem 9.3 of \cite{OS} reads as follows.
\begin{theorem}
	The following are equivalent.
	\begin{itemize}
		\item [(i)] $S$ is a regular semigroups.
		\item[(ii)] For every right ideal $R$ and every fuzzy left ideal $L$, we have $R \cap L=R L$.
		\item[(iii)] For every right ideal $R$ and every left ideal $L$ of $S$, (a) $R^{2}=R$, (b) $L^{2}=L$, (c) $RL$ is a quasi-ideal of $S$. 
		\item[(iv)] The set of all quasi-ideals of $S$ forms a regular semigroup.
		\item[(v)] For every quasi ideal $Q$ of $S$, $Q S  Q=Q$.
	\end{itemize}
\end{theorem}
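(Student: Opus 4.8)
The plan is to prove the equivalences by the short cycle $(i)\Rightarrow(ii)\Rightarrow(iii)\Rightarrow(i)$ and then to attach the remaining two by proving $(i)\Leftrightarrow(v)$ and $(i)\Leftrightarrow(iv)$ directly. Everything rests on two elementary facts available in an arbitrary semigroup, which I would record at the outset: every quasi-ideal $Q$ satisfies $Q=QS^{1}\cap S^{1}Q$, and, more generally, the smallest quasi-ideal containing a subset $X$ is exactly $XS^{1}\cap S^{1}X$; in particular the principal quasi-ideal of $a$ is $Q(a)=aS^{1}\cap S^{1}a$. I will also use freely that $S^{1}S=SS^{1}=S$ and $S^{1}S^{1}=S^{1}$, so that, for instance, $(aS^{1})^{2}=aS^{1}aS^{1}$ and $aS^{1}\cdot S^{1}a=aS^{1}a$.

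For $(i)\Rightarrow(ii)$ the inclusion $RL\subseteq R\cap L$ is immediate from $R$ being a right and $L$ a left ideal, while for $a\in R\cap L$ one picks $x$ with $a=axa=(ax)a$ and notes $ax\in RS\subseteq R$, $a\in L$, so $a\in RL$. For $(ii)\Rightarrow(iii)$: (c) follows because $(RL)S\subseteq RS\subseteq R$ and $S(RL)\subseteq SL\subseteq L$ force $(RL)S\cap S(RL)\subseteq R\cap L=RL$; for (a), applying $(ii)$ to a right ideal $R$ and the left ideal $S^{1}a$ (with $a\in R$) gives $a\in R\cap S^{1}a=RS^{1}a=Ra\subseteq R^{2}$, hence $R\subseteq R^{2}$, and (b) is symmetric. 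For $(iii)\Rightarrow(i)$: fix $a$, put $R=aS^{1}$, $L=S^{1}a$; by (c) the set $Q=RL=aS^{1}a$ is a quasi-ideal, while $R^{2}=R$ gives $a\in(aS^{1}a)S^{1}$ and $L^{2}=L$ gives $a\in S^{1}(aS^{1}a)$; if $a\notin Q$ these two memberships place $a$ in $QS\cap SQ\subseteq Q$, a contradiction, so $a\in aS^{1}a$, i.e. $a=a^{2}$ (whence $a=aaa$) or $a\in aSa$, and in either case $a$ is regular.

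For $(i)\Leftrightarrow(v)$: one always has $QSQ\subseteq QS\cap SQ\subseteq Q$, and if $S$ is regular then $a=axa\in QSQ$ for $a\in Q$; conversely, applying $(v)$ to $Q(a)$ gives $a\in Q(a)=Q(a)SQ(a)\subseteq(aS^{1})S(S^{1}a)=aSa$. For $(i)\Rightarrow(iv)$ I would first show that when $S$ is regular the setwise product of two quasi-ideals is again a quasi-ideal: given $a\in(Q_{1}Q_{2})S\cap S(Q_{1}Q_{2})$, write $a=q_{1}q_{2}s=tq_{1}'q_{2}'$, use $q_{1}=q_{1}vq_{1}$ to get $a=q_{1}va=(q_{1}vtq_{1}')q_{2}'$, and note $q_{1}vtq_{1}'\in Q_{1}SQ_{1}=Q_{1}$ by $(v)$, so $a\in Q_{1}Q_{2}$; then the quasi-ideals form a semigroup under setwise multiplication, and it is regular because $QSQ=Q$ for every quasi-ideal $Q$ (and $S$ is itself a quasi-ideal). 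For $(iv)\Rightarrow(i)$, apply regularity of that semigroup to the principal quasi-ideal: from $Q(a)=Q(a)\,Q'\,Q(a)$ together with $Q(a)\subseteq aS^{1}$ and $Q(a)\subseteq S^{1}a$ one gets $a\in Q(a)\subseteq(aS^{1})(Q')(S^{1}a)\subseteq(aS^{1})S(S^{1}a)=aSa$.

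The step I expect to be the real obstacle is the pair of implications involving $(iv)$. One must decide on the operation that turns the quasi-ideals into a semigroup, and the natural candidate — setwise product — is \emph{not} closed on quasi-ideals in a general semigroup, so the closure in $(i)\Rightarrow(iv)$ genuinely needs regularity through the ``insert a weak inverse in the middle'' manoeuvre above; correspondingly, in $(iv)\Rightarrow(i)$ one must be careful to use the \emph{regularity} of this (a priori only partially defined) multiplication, not merely that it is a semigroup. The rest is routine bookkeeping with one-sided ideals once the identity $Q=QS^{1}\cap S^{1}Q$ and its ``generated subset'' refinement are established.
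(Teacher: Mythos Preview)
Your argument is correct and complete as a self-contained proof of Steinfeld's theorem. Each of the implications is sound; in particular your handling of $(iii)\Rightarrow(i)$ via the quasi-ideal $Q=aS^{1}a$ and the dichotomy $a\in Q$ versus $a\in QS\cap SQ$ is clean, and the ``insert a weak inverse'' trick in $(i)\Rightarrow(iv)$ is the standard way to get closure of quasi-ideals under setwise product in the regular case.

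However, there is nothing to compare against: the paper does \emph{not} prove this theorem. It is quoted verbatim (up to the evident typo ``fuzzy left ideal'' in (ii), which should read ``left ideal'') as Theorem~9.3 of Steinfeld's monograph and is used as a black box. The paper's own work is the fuzzy analogue, Theorem~\ref{osreg}, whose proof proceeds by passing to $S\times I$ via the correspondences $\Psi$ of Propositions~\ref{fl=l}--\ref{fr=r} and then \emph{invoking} the very theorem you have just proved. So your proof is not an alternative to anything in the paper; it is, rather, a proof of the external input that the paper takes for granted. If your aim was to make the paper self-contained, your argument does that job; if your aim was to reproduce the paper's reasoning, note that the paper offers none here.
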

The last section of this paper is devoted to the semilattice decomposition of left regular semigroups whose left ideals are also right ideals. It is proved in \cite{Saito} that such semigroups have a nice decomposition into smaller and easier to understand subsemigroups. More specifically we have from \cite{Saito} the following theorem.
\begin{theorem} \label{tsaito}
	For a semigroup $(S, \cdot)$ the following conditions
	are equivalent:
	\begin{itemize}
		\item[(1)] $S$ is a semilattice of left simple semigroups.
		\item[(2)] $L_{1} \cap L_{2}=L_{1}L_{2}$ for every left ideals $L_{1}$ and $L_{2}$ of $S$.
		\item[(3)]  The set of all left ideals of $S$ is a semilattice under the multiplication of
		subsets.
		\item[(4)] $S$ is left regular and every left ideal of $S$ is two-sided.
	\end{itemize}
\end{theorem}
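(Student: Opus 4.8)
The plan is to establish the implications $(1)\Rightarrow(4)\Rightarrow(2)$, the easy equivalence $(2)\Leftrightarrow(3)$, the return arrow $(2)\Rightarrow(4)$, and finally $(4)\Rightarrow(1)$, which together make all four conditions equivalent. For $(1)\Rightarrow(4)$ I would work inside each left simple component $S_\alpha$: if $a\in S_\alpha$ then $a^2\in S_\alpha$ and $S_\alpha a^2=S_\alpha\ni a$, so $a\in Sa^2$ and $S$ is left regular; and if $L$ is a left ideal with $a\in L\cap S_\alpha$ and $s\in S_\beta$, then $sa$ lies in $Sa\subseteq L$ and in $S_{\beta\alpha}$, so $S_{\beta\alpha}=S_{\beta\alpha}(sa)\subseteq S(sa)\subseteq L$, whence $as\in S_{\alpha\beta}=S_{\beta\alpha}\subseteq L$, i.e. $L$ is two sided. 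Then $(4)\Rightarrow(2)$ is the computation $L_1L_2\subseteq L_1\cap L_2$ (using that $L_1$ is also a right ideal and $L_2$ a left ideal) together with $a=(sa)a\in L_1L_2$ for $a\in L_1\cap L_2$, where $a=sa^2$ by left regularity; and $(2)\Leftrightarrow(3)$ is routine, since $(2)$ makes the product of two left ideals equal to their intersection and hence again a left ideal, while idempotency in $(3)$ forces $L_1\cap L_2=(L_1\cap L_2)^2\subseteq L_1L_2$.

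For $(2)\Rightarrow(4)$ two-sidedness comes cheaply: given a left ideal $L$, $a\in L$ and $s\in S$, apply $(2)$ to $L$ and the principal left ideal $L(s)=S^1s$ to get $as\in L\cdot L(s)=L\cap L(s)\subseteq L$. Left regularity is the only slightly delicate point: apply $(2)$ to $L(a)=S^1a$ with itself, so that $a\in L(a)L(a)=(S^1a)(S^1a)$; since $L(a)$ is now known to be a right ideal, $(S^1a)(S^1a)=\bigl[(S^1a)S^1\bigr]a\subseteq(S^1a)a=S^1a^2$, so $a\in S^1a^2$, and the stray case $a=a^2$ is absorbed by $a=a\cdot a^2\in Sa^2$.

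The real content, and where I expect the main obstacle, is $(4)\Rightarrow(1)$. The key lemma I would prove first is that $L(xy)=L(x)\cap L(y)$ for all $x,y$: the inclusion $\subseteq$ is immediate since $xy$ lies in the two-sided ideal $L(x)$ and in $Sy=L(y)$; for $\supseteq$, note that left regularity upgrades $z\in S^1x\cap S^1y$ to $z\in Sx\cap Sy$, say $z=px=qy$, then $z=uz^2=u(px)(qy)=upxqy$, and since $xq\in(Sx)S\subseteq Sx$ (two-sidedness) one can replace $xq$ by $tx$ and read off $z=(upt)(xy)\in L(xy)$. With this, define $\eta$ on $S$ by $a\,\eta\,b\iff L(a)=L(b)$; the lemma makes $\eta$ a congruence, and $L(a^2)=L(a)$, $L(ab)=L(ba)$ show $S/\eta$ is idempotent and commutative, hence a semilattice. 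It then remains to see that each class $[a]$ is a left simple subsemigroup: closure is clear from $a^2\,\eta\,a$, and given $b,c\in[a]$ one has $Sb=Sc$, so $b=sc$ for some $s\in S$, and writing $b=ub^2=ub(sc)=(ubs)c$ by left regularity, the key lemma gives $L(ubs)=L(u)\cap L(b)\cap L(s)=L(b)$ — the last equality because $b=ub^2\in uS\subseteq L(u)$ and $b=sc\in sS\subseteq L(s)$ — so $ubs\in[a]$ and hence $b\in[a]c$, i.e. $[a]c=[a]$. Producing this representative of $b$ of the form (element of $[a])\cdot c$ is the one genuinely non-obvious step; the rest is bookkeeping with principal left ideals.
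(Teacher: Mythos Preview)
The paper does not prove this statement at all: Theorem~\ref{tsaito} is quoted in the introduction as a result of Saito~\cite{Saito} and is used later (in the proof of Theorem~\ref{tsaito+fuzzy}) as a black box. So there is no proof in the paper to compare your attempt against.

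That said, your argument is essentially the standard one and is correct in outline. Two small points are worth cleaning up. In $(3)\Rightarrow(2)$ you only record the inclusion $L_1\cap L_2=(L_1\cap L_2)^2\subseteq L_1L_2$; the reverse inclusion also uses the semilattice hypothesis, namely commutativity, via $L_1L_2=L_2L_1\subseteq L_1$ and $L_1L_2\subseteq L_2$. In the key lemma for $(4)\Rightarrow(1)$, the line ``$xq\in(Sx)S\subseteq Sx$'' is slightly off: what two-sidedness of $L(x)=S^1x$ actually gives is $xq\in xS\subseteq S^1x$, so $xq=tx$ with $t\in S^1$; both cases $t\in S$ and $t=1$ then yield $z\in S(xy)\subseteq L(xy)$ as you want. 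These are cosmetic; the substantive idea---proving $L(xy)=L(x)\cap L(y)$ under~(4), using it to make $a\sim b\Leftrightarrow L(a)=L(b)$ a semilattice congruence, and then checking left simplicity of the classes by manufacturing a factor in the same class---is exactly Saito's route.
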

There is also a version of this theorem in \cite{FS} (Theorem 4.1.3) where condition (2), (3) and (4) are fuzzyfied by replacing the term left ideal (resp. ideal) by fuzzy left ideal (resp. fuzzy ideal). We prove in our theorem \ref{tsaito+fuzzy} that condition (1) can also be fuzzyfied. For this we have defined what a fuzzy semilattice of left simple fuzzy subsemigroups is. In the same spirit with the above we give a fuzzy version of Theorem 4.1.3 of \cite{JMH} by characterizing completely regular semigroups in terms of what are defined here to be fuzzy semilattices of completely simple fuzzy subsemigroups.

\section{A relationship between fuzzy subsemigroups and semigroups}

Assume we are given a fuzzy subsemigroup $f:S \rightarrow [0,1]$ of a semigroup $(S,\cdot)$. Besides the semigroup $(S,\cdot)$ we consider the semilattice $([0,1],\wedge)$. We sometimes write $I$ instead of $[0,1]$ and let $I^{\ast}=I \setminus  \{0\}$. We can obviously regard $f$ as a prehomomorphism from $(S,\cdot)$ to $([0,1],\wedge)$. With this data at hand, we define a subsemigroup of the direct product semigroup $(S, \cdot) \times ([0,1],\wedge)$ by letting
$$\mathfrak{S}(S,f)=\{(b, y) \in S \times [0,1]: f(b)\geq y\}.$$
This is indeed a subsemigroup of $S \times [0,1]$ since for every $(a,x), (b,y) \in \mathfrak{S}(S,f)$ we have that $f(ab)\geq f(a) \wedge f(b) \geq x \wedge y$, and then,
$$(a,x)(b,y)=(ab, x \wedge y) \in \mathfrak{S}(S,f).$$
The semigroup $\mathfrak{S}(S,f)$ satisfies the following three conditions
\begin{itemize}
	\item [(i)] $\pi_{1}(\mathfrak{S}(S,f))=S$ where $\pi_{1}$ is the projection in the first coordinate,
	\item[(ii)] for every $b \in S$, $(b,\text{sup}\{y \in [0,1]: (b,y) \in \mathfrak{S}(S,f)\}) \in \mathfrak{S}(S,f)$,
	since $\text{sup}\{y \in [0,1]: (b,y) \in \mathfrak{S}(S,f)\}=f(b)$, and
	\item[(iii)] $(b,y) \in \mathfrak{S}(S,f)$ for every $y \leq f(b)$.
\end{itemize}
Conversely, every subsemigroup $\Sigma$ of the direct product $S \times [0,1]$ which satisfies the following three conditions
\begin{itemize}
	\item [(s-i)] $\pi_{1}(\Sigma)=S$ where $\pi_{1}$ is the projection in the first coordinate,
	\item[(s-ii)] for every $b \in S$, $(b,\text{sup}\{y \in [0,1]: (b,y) \in \Sigma\}) \in \Sigma$, and
	\item[(s-iii)] $(b,y) \in \Sigma$ for every $y \leq \text{sup}\{z \in [0,1]: (b,z) \in \Sigma\}$,
\end{itemize}
gives rise to a fuzzy subsemigroup $f: S \rightarrow [0,1]$ such that $\mathfrak{S}(S,f)=\Sigma$. For this we define
$$f: S \rightarrow [0,1] \text{ by } b \mapsto \text{sup}\{y \in [0,1]: (b,y) \in \Sigma\}.$$
From condition (ii) we know that $(b,f(b)) \in \Sigma$. The function $f$ is a fuzzy subsemigroup for if $a,b \in S$, then since $(a,f(a)), (b,f(b)) \in \Sigma$, it follows that 
$$(ab, f(a) \wedge f(b))=(a,f(a))(b,f(b)) \in \Sigma,$$
and then $f(ab) \geq f(a) \wedge f(b)$. Finally, 
\begin{align*}
	(b, y) \in \mathfrak{S}(S,f) & \Leftrightarrow f(b) \geq y\\
	& \Leftrightarrow (b,y) \in \Sigma && \text{(from condition (s-iii))}
\end{align*}
The set of subsemigroups $\Sigma$ of $S \times I$ as above is denoted here by $F(S \times I)$. Also we write $\mathfrak{F}(S)$ for the set of all fuzzy subsemigroups of $S$. Define now
$$\Psi: \mathfrak{F}(S) \rightarrow F(S \times I) \text{ by } f \mapsto \mathfrak{S}(S,f).$$
\begin{proposition} \label{fs=s}
The map $\Psi$ is bijective.
\end{proposition}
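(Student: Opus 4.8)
The plan is to write down the inverse of $\Psi$ explicitly, reusing the construction already carried out in the paragraphs preceding the statement, and then to check that the two composites are the respective identity maps; both checks will collapse to the single observation that $\sup\{y\in[0,1]:f(b)\ge y\}=f(b)$.

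First I would record that $\Psi$ is well defined, i.e. that $\mathfrak{S}(S,f)$ genuinely belongs to $F(S\times I)$; this is exactly the content of conditions (i)--(iii) verified above, which are precisely conditions (s-i)--(s-iii) for the subsemigroup $\Sigma=\mathfrak{S}(S,f)$. Next I would introduce the candidate inverse
$$\Phi:F(S\times I)\longrightarrow \mathfrak{F}(S),\qquad \Sigma\mapsto f_{\Sigma},\quad\text{where } f_{\Sigma}(b)=\sup\{y\in[0,1]:(b,y)\in\Sigma\}.$$
Here one must check that $\Phi$ lands where it should: for each $b\in S$ the set $\{y:(b,y)\in\Sigma\}$ is nonempty by (s-i) and is a subset of $[0,1]$, so the supremum exists and lies in $[0,1]$; moreover $f_{\Sigma}$ is a fuzzy subsemigroup, which is exactly the computation already carried out above, using that $(a,f_{\Sigma}(a)),(b,f_{\Sigma}(b))\in\Sigma$ by condition (s-ii).

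It then remains to verify $\Phi\circ\Psi=\mathrm{id}_{\mathfrak{F}(S)}$ and $\Psi\circ\Phi=\mathrm{id}_{F(S\times I)}$. For the first, given $f\in\mathfrak{F}(S)$ and $b\in S$, we get $f_{\mathfrak{S}(S,f)}(b)=\sup\{y\in[0,1]:f(b)\ge y\}=f(b)$, so $\Phi(\Psi(f))=f$. For the second, given $\Sigma\in F(S\times I)$, the chain $(b,y)\in\mathfrak{S}(S,f_{\Sigma})\Leftrightarrow f_{\Sigma}(b)\ge y\Leftrightarrow (b,y)\in\Sigma$, where the last step is condition (s-iii) applied to $\Sigma$, gives $\mathfrak{S}(S,f_{\Sigma})=\Sigma$, i.e. $\Psi(\Phi(\Sigma))=\Sigma$. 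Having a two-sided inverse, $\Psi$ is a bijection.

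I do not anticipate a serious obstacle: the statement is essentially a repackaging of the constructions already made, and the only points that deserve a modicum of care are (a) that the supremum in (s-ii) is actually attained by an element of $\Sigma$ --- this is built into the definition of $F(S\times I)$ and is precisely what is needed to see that $\Phi(\Sigma)$ is a fuzzy subsemigroup --- and (b) bookkeeping the fact that surjectivity of $\Psi$ genuinely uses all three conditions (s-i)--(s-iii), whereas injectivity is immediate once $\Phi\circ\Psi=\mathrm{id}$ is in hand.
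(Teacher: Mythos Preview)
Your proof is correct and follows essentially the same approach as the paper: both rely on the constructions already carried out before the proposition, using condition (s-iii) to obtain $\Psi\circ\Phi=\mathrm{id}$ (surjectivity). The only cosmetic difference is that the paper handles injectivity by a direct symmetric comparison---from $\mathfrak{S}(S,f)=\mathfrak{S}(S,g)$ one gets $(a,g(a))\in\mathfrak{S}(S,f)$, hence $g(a)\le f(a)$, and vice versa---whereas you obtain it from $\Phi\circ\Psi=\mathrm{id}$ via $\sup\{y:f(b)\ge y\}=f(b)$; these are equivalent.
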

\begin{proof}
That $\Psi$ is correct and onto was proved above. To prove that $\Psi$ is injective, we let $f,g \in \mathfrak{F}(S)$ such that $\mathfrak{S}(S,f)=\mathfrak{S}(S,g)$. Then, for every $a \in S$, $(a,g(a)) \in \mathfrak{S}(S,g)$, hence $(a,g(a)) \in \mathfrak{S}(S,f)$, and then $g(a) \leq f(a)$. In a symmetric way one can show that $f(a) \leq g(a)$, therefore $g(a)=f(a)$.
\end{proof}\\

We conclude this section with a further remark on the relationships between subsemigroups of $S \times I$ and fuzzy subsemigroups of $S$. Once we associated every fuzzy subsemigroup of $S$ with a subsemigroup of $S \times I$, we would like to establish a reverse correspondence. It is obvious that not every subsemigroup of $S \times I$ is in the image of $\Psi$. For instance, if $t \in I^{\ast}$ is fixed, then every subsemigroup $\Sigma$ of $S \times I$ with $\pi_{2}(\Sigma)=t$ cannot be in the image of $\Psi$. But still we can define a fuzzy subsemigroup $\sigma$ in terms of $\Sigma$ and find a relationship between $\mathfrak{S}(S,\sigma)$ and $\Sigma$. 
For this we define $\sigma: S \rightarrow I$ such that
$$\sigma(a)=\left\{ \begin{array}{ccc}
	\alpha & if & a \in \pi_{1}(\Sigma) \\
	0 & if & a \notin \pi_{1}(\Sigma),
\end{array}\right.$$
where $\alpha=\text{sup}\{t \in I: (a,t) \in \Sigma\}$. We prove that $\sigma$ is a fuzzy subsemigroup of $S$. Indeed, let $a, b \in S$. If $ab \notin \pi_{1}(\Sigma)$, then it is obvious that at least one of $a$ or $b$ cannot be in $\pi_{1}(\Sigma)$, consequently $\sigma(ab) =0=\sigma(a) \wedge \sigma(b)$. If $ab \in \pi_{1}(\Sigma)$ we let $\gamma=\text{sup}C$ where $C=\text{sup}\{t \in I: (ab,t) \in \Sigma\}$. If now $a,b \in \pi_{1}(\Sigma)$, we write $A=\{t' \in I: (a,t') \in \Sigma\}$, $B=\{t'' \in I: (b,t'') \in \Sigma\}$, and let $\alpha=\text{sup}A$ and $\beta=\text{sup}B$. We prove that $\alpha \wedge \beta \leq \gamma$ which in turn is equivalent to $\sigma(ab) \geq \sigma(a) \wedge \sigma(b)$.
Observe first that $t' \wedge t'' \in C$ for every $t' \in A$ and $t'' \in B$ since $(ab,t' \wedge t'')=(a,t')(b,t'') \in \Sigma$. Let us assume that $\alpha < \beta$ or equivalently that $\alpha \wedge \beta=\alpha$. In this case, there is $t_{\ast}'' \in B$ such that $t_{\ast}'' > \alpha$, and then
\begin{align*}
\alpha&=\text{sup} A\\
&=\text{sup}\{t' \wedge t_{\ast}'': t' \in A\}\\ 
&\leq \text{sup}\{t' \wedge t'': t' \in A, t'' \in B\} \\
&\leq \text{sup}C && \text{(from our observation)}\\
&=\gamma.
\end{align*}
A similar proof is available when $\alpha > \beta$. Let us now prove the claim when $\alpha=\beta$. There are increasing sequences $\{t'_{n}\} \subseteq A$ and $\{t''_{n}\} \subseteq B$ such that $t'_{n} \rightarrow \alpha$ and $t''_{n} \rightarrow \alpha$. It follows that $t'_{n} \wedge t''_{n} \rightarrow \alpha$, and then
\begin{align*}
\alpha&=\text{sup} A\\
&=\text{sup}\{t'_{n} : n \in \mathbb{N}\}\\ 
&=\text{sup}\{t'_{n} \wedge t''_{n} : n \in \mathbb{N}\}\\
& \leq \text{sup} C && \text{(from the obsevation)}\\
&=\gamma.
\end{align*}
In the three remaining cases when at least one of $a$ or $b$ is not in $\pi_{1}(\Sigma)$ it is straightforward that $\alpha \wedge \beta \leq \gamma$. Now we prove that $\Sigma \subseteq \mathfrak{S}(S, \sigma)$. Indeed, let $(a,t) \in \Sigma$, then from the definition of $\sigma$, $\sigma(a) \geq t$, hence $(a,t) \in \mathfrak{S}(S, \sigma)$. Letting $\text{Sub}(S \times I)$ be the set of subsemigroups of the product semigroup $S \times I$, we define
$$\tilde{\Psi}: \text{Sub}(S \times I) \rightarrow \mathfrak{F}(S) \text{ by } \Sigma \mapsto \sigma,$$
with $\sigma$ described as above. This map is obviously surjecive, and in general non injective since if $\sigma$ is such that $\Sigma \neq \mathfrak{S}(S, \sigma)$, then $\tilde{\Psi}(\Sigma)=\sigma= \tilde{\Psi}(\mathfrak{S}(S, \sigma))$. The right hand side equality above shows that $\tilde{\Psi}$ is a left inverse of $\Psi$.

\section{Similar relationships for other fuzzy subsystems}

Given a fuzzy left ideal $f:S \rightarrow [0,1]$ of a semigroup $(S,\cdot)$, we define a left ideal in the direct product semigroup $S \times [0,1]$ by
$$\mathcal{L}(S,f)=\{(b, y) \in S \times [0,1]: f(b)\geq y\}.$$
This is indeed a left ideal of $S \times [0,1]$ for if $(a,x) \in S \times [0,1]$ and $(b,y) \in \mathcal{L}(S,f)$, then
$$(a,x)(b,y)=(ab, x \wedge y) \in \mathcal{L}(S,f)$$
since 
$$f(ab) \geq f(b) \geq x \wedge f(b) \geq x \wedge y.$$
We also note that $\mathcal{L}(S,f)$ satisfies the following three conditions:
\begin{itemize}
	\item [(i)] $\pi_{1}(\mathcal{L}(S,f))=S$ where $\pi_{1}$ is the projection in the first coordinate,
	\item[(ii)] for every $b \in S$, $(b,\text{sup}\{y \in [0,1]: (b,y) \in \mathcal{L}(S,f)\}) \in \mathcal{L}(S,f)$,
	since $\text{sup}\{y \in [0,1]: (b,y) \in \mathcal{L}(S,f)\}=f(b)$, and
	\item[(iii)] $(b,y) \in \mathcal{L}(S,f)$ for every $y \leq f(b)$.
\end{itemize}

Conversely, every left ideal $L$ of the direct product semigroup $S \times [0,1]$ which satisfies the following three conditions
\begin{itemize}
	\item [(l-i)] $\pi_{1}(L)=S$ where $\pi_{1}$ is the projection in the first coordinate,
	\item[(l-ii)] for every $b \in S$, $(b,\text{sup}\{y \in [0,1]: (b,y) \in L\}) \in L$, and
	\item[(l-iii)] $(b,y) \in L$ for every $y \leq \text{sup}\{z \in [0,1]: (b,z) \in L\}$,
\end{itemize}
gives rise to a fuzzy left ideal of $S$. For this we define
$$f:S \rightarrow [0,1]: b \mapsto \text{sup}\{y \in [0,1]: (b,y) \in L\}.$$
In particular we have that $(b,f(b)) \in L$. Now we show that for all $a,b \in S$, $f(ab) \geq f(b)$. Since $L$ is a left ideal, then for $(a,1) \in S \times [0,1]$ and $(b,f(b)) \in L$, we have 
$$(a,1)(b,f(b))=(ab,f(b)) \in L,$$
hence from the definition of $f$, we have $f(ab)\geq f(b)$. Also we note that the left ideal $\mathcal{L}(S,f)$ arising from the fuzzy left ideal $f$ defined in terms of $L$ is exactly the left ideal $L$ we started with, that is $\mathcal{L}(S,f)=L$. Indeed, 
\begin{align*}
(b,y) \in \mathcal{L}(S,f) & \Leftrightarrow f(b) \geq y\\
& \Leftrightarrow  (b,y) \in L && \text{(from condition (l-iii))}
\end{align*}
A similar result with proposition \ref{fs=s} holds true for fuzzy left ideals. We let $L(S \times I)$ be the set of left ideals of $S \times I$ which satisfy properties (l-i), (l-ii) and (l-iii) above. Also we write $\mathfrak{L}(S)$ for the set of all fuzzy left ideals of $S$. 
\begin{proposition} \label{fl=l}
The restriction $\Psi|_{\mathfrak{L}(S)}$ of $\Psi$ on $\mathfrak{L}(S)$ is a bijection between $\mathfrak{L}(S)$ and $L(S \times I)$.
\end{proposition}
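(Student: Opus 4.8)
The plan is to leverage Proposition \ref{fs=s} together with the two computations carried out immediately before the statement. First I would record two set-theoretic inclusions: $\mathfrak{L}(S)\subseteq \mathfrak{F}(S)$, since every fuzzy left ideal is in particular a fuzzy subsemigroup; and $L(S\times I)\subseteq F(S\times I)$, since a left ideal of $S\times I$ is a subsemigroup of $S\times I$ (it is nonempty by (l-i)) and conditions (l-i), (l-ii), (l-iii) are verbatim copies of (s-i), (s-ii), (s-iii). Consequently $\Psi|_{\mathfrak{L}(S)}$ is the restriction of the injective map $\Psi$, so it is automatically injective; only the correctness of the proposed codomain $L(S\times I)$ and surjectivity onto it remain to be checked.

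For correctness I would take $f\in\mathfrak{L}(S)$ and note that, as subsets of $S\times I$, $\mathfrak{S}(S,f)=\mathcal{L}(S,f)=\{(b,y): f(b)\ge y\}$. The paragraph preceding the proposition shows that $\mathcal{L}(S,f)$ is a left ideal of $S\times I$ and satisfies the conditions there labelled (i), (ii), (iii), which are exactly (l-i), (l-ii), (l-iii). Hence $\Psi(f)=\mathfrak{S}(S,f)\in L(S\times I)$, so $\Psi$ does restrict to a map $\mathfrak{L}(S)\to L(S\times I)$.

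For surjectivity I would take $L\in L(S\times I)$ and reuse the construction already given in the excerpt: define $f:S\to[0,1]$ by $f(b)=\sup\{y\in[0,1]:(b,y)\in L\}$. The argument before the statement shows that $f$ is a fuzzy left ideal — using that $(a,1)(b,f(b))=(ab,f(b))\in L$ because $L$ is a left ideal and $(b,f(b))\in L$ by (l-ii) — so $f\in\mathfrak{L}(S)$, and that $\mathcal{L}(S,f)=L$, using (l-iii). Since $\mathcal{L}(S,f)=\mathfrak{S}(S,f)=\Psi(f)$, this yields $\Psi(f)=L$, so $L$ lies in the image of $\Psi|_{\mathfrak{L}(S)}$.

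Combining these, $\Psi|_{\mathfrak{L}(S)}$ is a well-defined, injective, surjective map $\mathfrak{L}(S)\to L(S\times I)$, hence a bijection. I do not expect any genuine obstacle: all the analytic content — the supremum arguments guaranteeing that the $f$ built from $L$ is really a fuzzy left ideal, and the reverse passage $\mathcal{L}(S,f)=L$ — has already been discharged in the discussion preceding the proposition. The only point requiring a little care is the explicit identification of the three defining conditions of $L(S\times I)$ with those of $F(S\times I)$, so that injectivity may legitimately be inherited from Proposition \ref{fs=s} rather than re-proved.
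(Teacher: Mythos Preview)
Your proposal is correct and follows essentially the same approach as the paper's own proof: well-definedness via the identification $\Psi(f)=\mathcal{L}(S,f)$, surjectivity via the preceding converse construction, and injectivity inherited from Proposition~\ref{fs=s} as a restriction of an injective map. The paper merely states these three points more tersely, whereas you spell out the inclusions $\mathfrak{L}(S)\subseteq\mathfrak{F}(S)$ and $L(S\times I)\subseteq F(S\times I)$ explicitly.
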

\begin{proof}
It is straightforward that for every fuzzy left ideal $f$, $\mathcal{L}(S,f)=\Psi(f)$, hence $\Psi|_{\mathfrak{L}(S)}: \mathfrak{L}(S) \rightarrow L(S \times I)$ is well defined. On the other hand, every left ideal $L$ of $L(S \times I)$ is in the image of $\Psi$, hence the restriction map is onto. Finally, $\Psi|_{\mathfrak{L}(S)}$ is 1-1 as a restriction of a 1-1 map.
\end{proof}\\
\\
We note by passing that similar results hold true for fuzzy right ideals too. For instance, one can prove that if we let $\mathfrak{R}(S)$ be the set of fuzzy right ideals of $S$, and $R(S \times I)$ the set of right ideals of $S \times I$ which satisfy certain condition similar to those $(l-i,ii,iii)$, then we have,
\begin{proposition} \label{fr=r}
	The restriction $\Psi|_{\mathfrak{R}(S)}$ of $\Psi$ on $\mathfrak{R}(S)$ is a bijection between $\mathfrak{R}(S)$ and $R(S \times I)$.
\end{proposition}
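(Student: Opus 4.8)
The plan is to mirror the treatment of fuzzy left ideals almost verbatim, the only genuine difference being the side on which the ambient semigroup $S \times I$ acts. First I would fix the conditions defining $R(S \times I)$: a right ideal $R$ of $S \times I$ belongs to $R(S \times I)$ precisely when (r-i) $\pi_{1}(R)=S$, (r-ii) for every $b \in S$ one has $(b,\text{sup}\{y \in I : (b,y) \in R\}) \in R$, and (r-iii) $(b,y) \in R$ whenever $y \leq \text{sup}\{z \in I : (b,z) \in R\}$. Given a fuzzy right ideal $f : S \rightarrow I$, I would set $\mathcal{R}(S,f)=\{(b,y) \in S \times I : f(b) \geq y\}$ and check that it lies in $R(S \times I)$: for $(b,y) \in \mathcal{R}(S,f)$ and $(a,x) \in S \times I$ we get $(b,y)(a,x)=(ba, y \wedge x)$ with $f(ba) \geq f(b) \geq y \geq y \wedge x$, so $\mathcal{R}(S,f)$ is a right ideal, and properties (r-i)--(r-iii) follow just as in the left-ideal case since $\text{sup}\{y : (b,y) \in \mathcal{R}(S,f)\}=f(b)$.

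For the converse direction I would take $R \in R(S \times I)$, define $f : S \rightarrow I$ by $f(b)=\text{sup}\{y \in I : (b,y) \in R\}$, so that $(b,f(b)) \in R$ by (r-ii), and verify $f(ab) \geq f(a)$ for all $a,b \in S$. Here is where the asymmetry enters: because $R$ is a \emph{right} ideal, multiplying $(a,f(a)) \in R$ on the right by $(b,1) \in S \times I$ gives $(a,f(a))(b,1)=(ab, f(a) \wedge 1)=(ab, f(a)) \in R$, whence $f(ab) \geq f(a)$. Then $\mathcal{R}(S,f)=R$ follows from (r-iii) exactly as $\mathcal{L}(S,f)=L$ did, via the chain $(b,y) \in \mathcal{R}(S,f) \Leftrightarrow f(b) \geq y \Leftrightarrow (b,y) \in R$.

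Finally I would conclude the bijectivity statement by the same three-line argument used for Proposition \ref{fl=l}: for every fuzzy right ideal $f$ one has $\mathcal{R}(S,f)=\Psi(f)$ (both equal $\{(b,y) : f(b) \geq y\}$), so $\Psi|_{\mathfrak{R}(S)}$ maps $\mathfrak{R}(S)$ into $R(S \times I)$ and is well defined; every $R \in R(S \times I)$ equals $\mathcal{R}(S,f)$ for the $f$ constructed above, so the restriction is onto; and it is injective as a restriction of the injective map $\Psi$. I do not anticipate any real obstacle: the entire argument is the left/right mirror image of the one already carried out, and the product decomposition in the second coordinate is governed by the commutative idempotent operation $\wedge$, so no complication arises there — the only point that genuinely uses the ``right'' hypothesis is the single computation $(a,f(a))(b,1)=(ab,f(a))$.
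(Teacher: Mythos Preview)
Your proposal is correct and is exactly the left/right mirror argument the paper itself intends: the paper states Proposition~\ref{fr=r} without proof, merely noting that ``similar results hold true for fuzzy right ideals too'' with conditions analogous to (l-i)--(l-iii), and your write-up supplies precisely those details, with the single asymmetric step $(a,f(a))(b,1)=(ab,f(a))$ replacing the paper's $(a,1)(b,f(b))=(ab,f(b))$.
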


We conclude this section by proving that analogous results to the above hold true for fuzzy quasi-ideals. 
\begin{lemma} \label{lm}
	Whenever $q$ is a fuzzy quasi-ideal and $a \in S$ has a family of factorizations $a=b_{i}c_{i}$ with $i \in J$, then $\forall i \in J$, $q(a) \geq q(b_{i})$ or $q(a) \geq q(c_{i})$.
\end{lemma}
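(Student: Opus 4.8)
The plan is to unwind the definition of a fuzzy quasi-ideal directly at the element $a$. Recall that $q$ being a fuzzy quasi-ideal means $q \circ S \cap S \circ q \subseteq q$, which evaluated at the point $a$ reads $(q \circ S)(a) \wedge (S \circ q)(a) \leq q(a)$. So the whole argument reduces to bounding the two factors $(q \circ S)(a)$ and $(S \circ q)(a)$ from below in terms of $q(b_{i})$ and $q(c_{i})$, and then observing what a binary meet in $[0,1]$ does.

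First I would fix $i \in J$. Since $a = b_{i}c_{i}$, the element $a$ admits at least one factorization, so we are in the first branch of the definition of $\circ$; moreover $S(x) = 1$ for every $x \in S$. Hence $(q \circ S)(a) = \bigvee_{a = xy}(q(x) \wedge S(y)) = \bigvee_{a = xy} q(x) \geq q(b_{i})$, the last inequality holding because the pair $(b_{i},c_{i})$ is one of the pairs $(x,y)$ with $a = xy$, so $q(b_{i})$ is one of the terms of the supremum. Symmetrically, $(S \circ q)(a) = \bigvee_{a = xy} q(y) \geq q(c_{i})$.

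Combining these two bounds with the quasi-ideal inequality gives
$$q(a) \geq (q \circ S)(a) \wedge (S \circ q)(a) \geq q(b_{i}) \wedge q(c_{i}).$$
Since the meet of two elements of $[0,1]$ equals one of those two elements (namely the smaller one), we conclude that $q(a) \geq q(b_{i})$ or $q(a) \geq q(c_{i})$; as $i \in J$ was arbitrary, this holds for every $i$.

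There is essentially no serious obstacle here. The only points requiring a moment's care are: making sure we are in the nonzero branch of the definition of $\circ$, which is immediate from the hypothesis that $a$ actually has a factorization; keeping track that $S$ is the constant map $1$ so that the meets with $S$ disappear; and noticing at the end that the desired disjunction follows purely formally from the fact that a binary meet in a chain attains one of its two arguments.
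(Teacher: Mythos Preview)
Your computation is clean and correct, but it proves a weaker statement than the one the paper intends. The phrasing ``$\forall i \in J$, $q(a)\geq q(b_i)$ or $q(a)\geq q(c_i)$'' is ambiguous; the paper means the \emph{uniform} disjunction
\[
\bigl(\forall i\in J,\ q(a)\geq q(b_i)\bigr)\quad\text{or}\quad\bigl(\forall i\in J,\ q(a)\geq q(c_i)\bigr),
\]
as is clear both from its proof and from how the lemma is used later (e.g.\ in showing $\mathfrak{Q}(S,q)$ is a quasi-ideal and in condition (q-iv), where one side has to work for all factorizations at once). Your argument fixes $i$, obtains $q(a)\geq q(b_i)\wedge q(c_i)$, and then picks the smaller of the two; the choice of side may therefore depend on $i$, which is only the pointwise disjunction.

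The fix is exactly what the paper does: instead of throwing away the suprema in $(q\circ S)(a)$ and $(S\circ q)(a)$ immediately, keep them. From the quasi-ideal inequality,
\[
q(a)\ \geq\ (q\circ S)(a)\wedge(S\circ q)(a)\ \geq\ \Bigl(\bigvee_{i\in J}q(b_i)\Bigr)\wedge\Bigl(\bigvee_{j\in J}q(c_j)\Bigr).
\]
Now use the chain property once, at the level of the two suprema: their meet equals one of them. If it equals $\bigvee_{i}q(b_i)$, then $q(a)\geq q(b_i)$ for every $i$; otherwise $q(a)\geq q(c_j)$ for every $j$. That is the uniform conclusion the paper states and later relies on.
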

\begin{proof}
	From the definition of a fuzzy quasi-ideal $q$ we have
	\begin{align*}
		q(a) & \geq (\underset{i \in J}{\vee }q(b_{i} )) \wedge (\underset{j \in J}{\vee }  q(c_{j}))\\
		&=\underset{i \in J}{\vee }q(b_{i}) && \text{(if for instance,)}
	\end{align*}
	then $q(a) \geq q(b_{i})$  for all $i \in J$. Similarly, one can show that when 
	$$\text{min}\left(\underset{i \in J}{\vee }q(b_{i}), \underset{j \in J}{\vee }q(c_{j})\right)=\underset{j \in J}{\vee }q(c_{j}),$$
	then $q(a) \geq q(c_{j})$.
\end{proof}\\
\\
For every fixed fuzzy quasi-ideal $q$ of $S$, we let
$$\mathfrak{Q}(S,q)=\{(a,x) \in S \times I: q(a) \geq x\},$$
and prove that $\mathfrak{Q}(S,q)$ is a quasi-ideal of $S \times I$. Indeed, if $(a,x),(b,y) \in \mathfrak{Q}(S,q)$ and $(s,t),(s',t') \in S \times I$ are such that
$$(a,x)(s,t)=(s',t')(b,y) \in \mathfrak{Q}(S,q) \cdot (S \times I) \cap (S \times I) \cdot \mathfrak{Q}(S,q),$$ then
\begin{align*}
	(as,x \wedge t)=(s'b,t' \wedge y), 
\end{align*}
hence $as=s'b$ and $q(as)=q(s'b)$. From Lemma \ref{lm} we have that either $q(as) \geq q(a)$, or $q(s'b) \geq q(b)$. In the first case we have that
$$q(as) \geq q(a) \geq  x \geq x \wedge t,$$
which proves that $(a,x)(s,t) = (as,x \wedge t) \in \mathfrak{Q}(S,q)$. If the second case above occurs, then 
$$q(s'b) \geq q(b) \geq y \geq y \wedge t',$$
consequently, $(s',t')(b,y)=(s'b, t' \wedge b) \in \mathfrak{Q}(S,q)$. We have thus proved that $\mathfrak{Q}(S,q)$ is a quasi-ideal of $S \times I$. The quasi-ideal $\mathfrak{Q}(S,q)$ satisfies the following properties
\begin{itemize}
	\item [(i)] $\pi_{1}(\mathfrak{Q}(S,q))=S$ where $\pi_{1}$ is the projection in the first coordinate,
	\item[(ii)] for every $b \in S$, $(b,\text{sup}\{y \in [0,1]: (b,y) \in \mathfrak{Q}(S,q)\}) \in \mathfrak{Q}(S,q)$, 
	\item[(iii)] $(b,y) \in \mathfrak{Q}(S,q)$ for every $y \leq \text{sup}\{z \in [0,1]: (b,z) \in \mathfrak{Q}(S,q)\}$, and
	\item[(iv)] if $a \in S$ has a family of decompositions $a=b_{i}c_{i}$ with $i \in J$, then for all $i \in J$,
	$$\text{sup}\{x \in [0,1]: (a,x) \in \mathfrak{Q}(S,q)\} \geq \text{sup}\{y \in [0,1]: (b_{i},y) \in \mathfrak{Q}(S,q)\},$$
	or for all $i \in J$,
	$$\text{sup}\{x \in [0,1]: (a,x) \in \mathfrak{Q}(S,q)\} \geq \text{sup}\{z \in [0,1]: (c_{i},z) \in \mathfrak{Q}(S,q)\}.$$
\end{itemize}
Conversely, every quasi-ideal $Q$ of the direct product semigroup $S \times [0,1]$ which satisfies the following four conditions
\begin{itemize}
	\item [(q-i)] $\pi_{1}(Q)=S$ where $\pi_{1}$ is the projection in the first coordinate,
	\item[(q-ii)] for every $b \in S$, $(b,\text{sup}\{y \in [0,1]: (b,y) \in Q\}) \in Q$,
	\item[(q-iii)] $(b,y) \in Q$ for every $y \leq \text{sup}\{z \in [0,1]: (b,z) \in Q\}$, and
	\item[(q-iv)] If $a \in S$ has a family of decompositions $a=b_{i}c_{i}$ with $i \in J$, then for all $i \in J$,
	$$\text{sup}\{x \in [0,1]: (a,x) \in Q\} \geq \text{sup}\{y \in [0,1]: (b_{i},y) \in Q\},$$
	or for all $i \in J$,
	$$\text{sup}\{x \in [0,1]: (a,x) \in Q\} \geq \text{sup}\{z \in [0,1]: (c_{i},z) \in Q\},$$
\end{itemize}
gives rise to a fuzzy quasi ideal of $S$. Indeed, let $q:S\rightarrow [0,1]$ be such that 
$$q(b)=\text{sup}\{y \in [0,1]: (b,y) \in Q\}.$$
In particular we have that $(b,q(b)) \in Q$. Now we show that $q \circ S \cap S \circ q \subseteq q$. Let $a \in S$ be any fixed element. Denote by $J$ the set of indexes for which there are $b_{i}, c_{i} \in S$ with $i \in J$ such that $a=b_{i}c_{i}$. To prove the above inclusion we need to prove that 
$$\underset{i \in J}{\vee}q(b_{i}) \wedge \underset{j \in J}{\vee}q(c_{j}) \leq q(a),$$
which due to the condition (q-iii) follows directly if we prove that 
$$(a,\underset{(i,j) \in I \times I}{\vee}q(b_{i}) \wedge q(c_{j}))=(a, \underset{i \in I}{\vee}q(b_{i}) \wedge \underset{j \in I}{\vee}q(c_{j})) \in Q.$$
From condition (q-iv) we may assume that for instance $q(a) \geq q(b_{i})$ for all $i \in J$, hence for each $j \in J$, we have from condition (q-iii) that $(a, q(b_{i}) \wedge q(c_{j})) \in Q$. 
Conditions (q-ii) and (q-iii) now imply that
$(a, \underset{(i,j) \in J \times J}{\vee}(q(b_{i}) \wedge q(c_{i}))) \in Q$, and we are done. A similar proof is available if we assume that for all $j \in J$, $q(a) \geq q(c_{j})$. Now we show that $\mathfrak{Q}(S,q)=Q$ for the fuzzy quasi-ideal $q$ defined in terms of the given $Q$. Indeed, 
\begin{align*}
	(b,y) \in \mathfrak{Q}(S,q) & \Leftrightarrow q(b) \geq y \\
	& \Leftrightarrow (b,y) \in Q && \text{(from (q-iii)).}
\end{align*}
In a similar fashion with the case of fuzzy left and right ideals, we denote here by $Q(S \times I)$ the set of all quasi-ideals of $S \times I$ that satisfy conditions (q i-iv), and let $\mathfrak{Q}(S)$ be the set of all fuzzy quasi-ideals of $S$. With these notations we have this
\begin{proposition} 
	The restriction $\Psi|_{\mathfrak{Q}(S)}$ of $\Psi$ on $\mathfrak{Q}(S)$ is a bijection between $\mathfrak{Q}(S)$ and $Q(S \times I)$.
\end{proposition}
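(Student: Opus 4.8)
The plan is to follow the same template already used for Propositions \ref{fs=s}, \ref{fl=l} and \ref{fr=r}, since essentially all the work has been carried out in the discussion preceding the statement; the proof is a reassembly of those facts. First I would observe that every fuzzy quasi-ideal is in particular a fuzzy subsemigroup, so that $\Psi$ genuinely makes sense on $\mathfrak{Q}(S)$: given a fuzzy quasi-ideal $q$ and any factorization $a=bc$, Lemma \ref{lm} yields $q(a)\geq q(b)$ or $q(a)\geq q(c)$, and in either case $q(a)\geq q(b)\wedge q(c)$. Hence $\mathfrak{Q}(S)\subseteq\mathfrak{F}(S)$ and the restriction $\Psi|_{\mathfrak{Q}(S)}$ is defined.

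Next I would verify that this restriction has the right codomain. For $q\in\mathfrak{Q}(S)$ one has $\Psi(q)=\mathfrak{S}(S,q)=\{(a,x)\in S\times I: q(a)\geq x\}=\mathfrak{Q}(S,q)$, and it was shown above that $\mathfrak{Q}(S,q)$ is a quasi-ideal of $S\times I$ satisfying conditions (q-i)–(q-iv). Therefore $\Psi(q)\in Q(S\times I)$, so $\Psi|_{\mathfrak{Q}(S)}:\mathfrak{Q}(S)\to Q(S\times I)$ is well defined.

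For surjectivity I would appeal to the converse construction already given: starting from $Q\in Q(S\times I)$, put $q(b)=\text{sup}\{y\in[0,1]:(b,y)\in Q\}$. The argument above, using (q-ii), (q-iii) and (q-iv), shows that $q$ is a fuzzy quasi-ideal, and the displayed equivalence at the end of that discussion gives $\mathfrak{Q}(S,q)=Q$, that is $\Psi(q)=Q$. Thus $\Psi|_{\mathfrak{Q}(S)}$ is onto. Injectivity is then immediate, since $\Psi|_{\mathfrak{Q}(S)}$ is a restriction of the injective map $\Psi$ of Proposition \ref{fs=s}.

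The only point that needs a moment's care — and the one I would flag as the "main obstacle," modest as it is — is the opening observation that a fuzzy quasi-ideal is a fuzzy subsemigroup, without which the very symbol $\Psi|_{\mathfrak{Q}(S)}$ would be meaningless; here Lemma \ref{lm} does exactly the required job. Everything after that is bookkeeping with facts already established in this section.
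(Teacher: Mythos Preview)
Your proof is correct and follows essentially the same approach as the paper's: well-definedness and surjectivity are read off the preceding discussion, and injectivity comes from restricting the injective $\Psi$. Your explicit verification via Lemma~\ref{lm} that every fuzzy quasi-ideal is a fuzzy subsemigroup (so that $\mathfrak{Q}(S)\subseteq\mathfrak{F}(S)$ and the restriction $\Psi|_{\mathfrak{Q}(S)}$ is defined) is a point the paper leaves implicit, and it is good that you flagged it.
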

\begin{proof}
	The map $\Psi$ sends $q \mapsto \mathfrak{Q}(S,q) \in Q(S \times I)$ and this map is onto from the above. The injectivity of $\Psi|_{\mathfrak{Q}(S)}$ is dealt with in the same way as that of fuzzy one sided ideals.
\end{proof}

\section{Regular Semigroups}

Our next Theorem \ref{osreg} is the fuzzy analogue of theorem 9.3 of \cite{OS} and is partially covert from theorems 3.1.2-3.1.8 of \cite{FS} apart from characterization (iv) which is a new fuzzyfication of its counterpart in theorem 9.3 of \cite{OS}. This theorem characterizes the regularity of a semigroup in terms of fuzzy one sided ideals and fuzzy quasi-ideals but in contrast with \cite{FS} where the proofs are analogue to those of theorem 9.3 of \cite{OS}, our proofs employ the identifications of Proposition \ref{fl=l} and Proposition \ref{fr=r}, to obtain the result as an implication of the corresponding result of theorem 9.3 of \cite{OS}. The following lemma will be useful in the proof of theorem \ref{osreg}.
\begin{lemma} \label{comp}
Let $B,C$ be subsemigroups of a semigroup $S$, and let $\chi_{B}$ and $\chi_{C}$ be the corresponding fuzzy subsemigroups. Then, $\chi_{B} \circ \chi_{C}=\chi_{BC}$.
\end{lemma}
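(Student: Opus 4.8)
The plan is to prove $\chi_B \circ \chi_C = \chi_{BC}$ by comparing the two functions pointwise at an arbitrary $a \in S$, splitting into the cases $a \in BC$ and $a \notin BC$. Recall that by hypothesis $B$ and $C$ are subsemigroups, so $BC$ is a subsemigroup of $S$ as well (being a product of subsemigroups in a way that the authors implicitly use), and hence $\chi_{BC}$ is a genuine fuzzy subsemigroup. The values of all three fuzzy subsets lie in $\{0,1\}$, so the verification amounts to checking when $(\chi_B \circ \chi_C)(a)$ equals $1$.

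First I would treat the case $a \notin BC$. If $a$ has no factorization $a = xy$ in $S$ at all, then $(\chi_B \circ \chi_C)(a) = 0 = \chi_{BC}(a)$ by the first-line-versus-second-line definition of $\circ$. If $a$ does factor as $a = xy$ in $S$, then for every such factorization we cannot have both $x \in B$ and $y \in C$ (else $a = xy \in BC$), so $\chi_B(x) \wedge \chi_C(y) = 0$ for every factorization, whence the supremum over all factorizations is $0$, giving $(\chi_B \circ \chi_C)(a) = 0 = \chi_{BC}(a)$. Next, the case $a \in BC$: then there exist $b \in B$, $c \in C$ with $a = bc$, so in particular $a = xy$ is solvable in $S$, the first branch of the definition of $\circ$ applies, and the term corresponding to this factorization contributes $\chi_B(b) \wedge \chi_C(c) = 1 \wedge 1 = 1$; since all terms in the join are at most $1$, the supremum is exactly $1$, so $(\chi_B \circ \chi_C)(a) = 1 = \chi_{BC}(a)$. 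Combining the cases gives the equality of functions.

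There is essentially no serious obstacle here; the only point requiring a little care is the bookkeeping around the definition of $\circ$, namely that its value is declared to be $0$ precisely when $a$ admits no factorization in $S$, so one must not conflate ``$a \notin BC$'' with ``$a$ has no factorization''. I would make sure the write-up keeps those two situations distinct within the $a \notin BC$ case. A secondary but trivial remark is that $\chi_B(x) \wedge \chi_C(y) \in \{0,1\}$ always, so the join over factorizations is just a logical ``or'' — it equals $1$ iff some factorization has both factors in the respective sets — which is exactly the condition $a \in BC$; phrasing the argument this way makes all three cases collapse into essentially one observation.
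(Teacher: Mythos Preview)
Your proof is correct and follows essentially the same pointwise case analysis as the paper's own proof; the paper organizes the split by the value of $(\chi_B \circ \chi_C)(a)$ rather than by membership of $a$ in $BC$, but the logic is identical. One small caution: your aside that $BC$ is automatically a subsemigroup is not true in general (the product of two arbitrary subsemigroups need not be a subsemigroup), but this is irrelevant here since the lemma only asserts an equality of fuzzy subsets, and neither your argument nor the paper's uses that claim.
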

\begin{proof}
For every $a \in S$ we have that,
\begin{align*}
	(\chi_{B} \circ \chi_{C})(a)=1 & \Leftrightarrow\underset{a=st}{\vee}\chi_{B}(s) \wedge \chi_{C}(t)=1 \\
	& \Leftrightarrow \text{ there is a decomposition } a=st, \text{ with } \chi_{B}(s)=1 \text{ and } \chi_{C}(t)=1\\
	& \Leftrightarrow\text{ there is a decomposition } a=st, \text{ with } s \in B \text{ and } t \in C \\
	& \Leftrightarrow a \in BC\\
	& \Leftrightarrow \chi_{BC}(a)=1.
\end{align*} 
Further, if $(\chi_{B} \circ \chi_{C})(a)=0$, then either $a$ does not have a nontrivial decomposition $a=st$, in which case $a \notin BC$ and then $\chi_{BC}(a)=0$, or $a$ decomposes as $a=st$ but for every such decomposition we should have that $\chi_{B}(s) \wedge \chi_{C}(t)=0$. This means that either $s \notin B$ or $t \notin C$, consequently $a \notin BC$ and $\chi_{BC}(a)=0$. Conversely, if $\chi_{BC}(a)=0$, then, either $a$ is indecomposable, or even it is, it can never be written as $a=st$ with both $s \in B$ and $t \in C$, consequently $(\chi_{B} \circ \chi_{C})(a)=0$. Recollecting we have that $\chi_{B} \circ \chi_{C}=\chi_{BC}$.
\end{proof}
\begin{theorem} \label{osreg}
The following are equivalent.
\begin{itemize}
	\item [(i)] $S$ is a regular semigroups.
	\item[(ii)] For every fuzzy right ideal $f$ and every fuzzy left ideal $g$, we have $f \cap g=f \circ g$.
	\item[(iii)] For every fuzzy right ideal $f$ and every fuzzy left ideal $g$ of $S$, (a) $f \circ f=f$, (b) $g \circ g=g$, (c) $f \circ g$ is a fuzzy quasi-ideal of $S$. 
	\item[(iv)] The set $\mathcal{Q}(S)$ of all fuzzy quasi-ideals of $S$ forms a regular semigroup where the multiplication is $\circ$ and for every $q \in \mathcal{Q}(S)$, $q \circ S \circ q=q$..
\end{itemize}
\end{theorem}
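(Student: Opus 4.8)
The plan is to transport all four conditions to the direct product semigroup $S\times I$, apply Theorem~9.3 of \cite{OS} there, and translate back using the bijections $\Psi|_{\mathfrak{L}(S)}$, $\Psi|_{\mathfrak{R}(S)}$ and $\Psi|_{\mathfrak{Q}(S)}$ of the previous sections. Two preliminaries are needed. First, $S$ is regular if and only if $S\times I$ is: the semilattice $([0,1],\wedge)$ consists of idempotents hence is regular, a direct product of regular semigroups is regular, and regularity passes to the homomorphic image $\pi_{1}(S\times I)=S$. Second, one records how $\Psi$ interacts with the operations involved: $\mathfrak{S}(S,f\cap g)=\mathfrak{S}(S,f)\cap\mathfrak{S}(S,g)$ always, while $\mathfrak{S}(S,f)\cdot\mathfrak{S}(S,g)\subseteq\mathfrak{S}(S,f\circ g)$ always, with equality whenever $S$ is regular. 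For the latter one uses that in a regular semigroup every element $a=axa$ is decomposable and the factorization $a=(ax)a$ realizes the supremum defining $(f\circ g)(a)$, so neither an indecomposable element nor an unattained supremum can break the equality. Finally, one uses Lemma~\ref{comp} together with the standard fact that $\chi_{A}$ is a fuzzy right ideal, fuzzy left ideal, resp. fuzzy quasi-ideal precisely when $A$ is a right ideal, left ideal, resp. quasi-ideal of $S$.

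\emph{From $(i)$ to the rest.} Assume $S$, hence $S\times I$, is regular, and apply Theorem~9.3 of \cite{OS} to $S\times I$. For $(ii)$: given a fuzzy right ideal $f$ and a fuzzy left ideal $g$, the sets $\Psi(f)$ and $\Psi(g)$ are a right ideal and a left ideal of $S\times I$, so $\Psi(f)\cap\Psi(g)=\Psi(f)\cdot\Psi(g)$; rewriting both sides via the compatibilities above gives $\mathfrak{S}(S,f\cap g)=\mathfrak{S}(S,f\circ g)$, and injectivity of $\Psi$ (Proposition~\ref{fs=s}) yields $f\cap g=f\circ g$. For $(iii)$: the same translation applied to parts (a), (b), (c) of Theorem~9.3$(iii)$ for $S\times I$ gives $f\circ f=f$, $g\circ g=g$, and that $\Psi(f)\cdot\Psi(g)$ is a quasi-ideal of $S\times I$; one checks this quasi-ideal still satisfies conditions (q-i)--(q-iv), so under the bijection $\mathfrak{Q}(S)\leftrightarrow Q(S\times I)$ it corresponds to the fuzzy quasi-ideal $f\circ g$. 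For $(iv)$: Theorem~9.3$(iv)$--$(v)$ for $S\times I$ says the quasi-ideals of $S\times I$ form a regular semigroup and satisfy $QSQ=Q$; the subset product of two quasi-ideals $\mathfrak{Q}(S,q_{1})$, $\mathfrak{Q}(S,q_{2})$ is again a quasi-ideal and again satisfies (q-i)--(q-iv), so these quasi-ideals form a subsemigroup carried by the bijection onto $(\mathcal{Q}(S),\circ)$, which is therefore a regular semigroup, and $q\circ S\circ q=q$ either transports from $QSQ=Q$ in $S\times I$ or is checked directly from $a=axa$.

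\emph{From $(ii)$, $(iii)$ or $(iv)$ back to $(i)$.} Let $R$ be a right ideal, $L$ a left ideal and $Q$ a quasi-ideal of $S$, and substitute the characteristic functions $\chi_{R}$, $\chi_{L}$, $\chi_{Q}$ into the hypothesis. By Lemma~\ref{comp} one has $\chi_{R}\circ\chi_{L}=\chi_{RL}$, $\chi_{R}\cap\chi_{L}=\chi_{R\cap L}$, $\chi_{R}\circ\chi_{R}=\chi_{R^{2}}$, $\chi_{L}\circ\chi_{L}=\chi_{L^{2}}$, and $\chi_{Q}\circ S\circ\chi_{Q}=\chi_{QSQ}$, and a subset is determined by its characteristic function; hence $(ii)$ forces $R\cap L=RL$, $(iii)$ forces $R^{2}=R$, $L^{2}=L$ and $RL$ a quasi-ideal, and $(iv)$ forces $QSQ=Q$ (using only the identity $q\circ S\circ q=q$ part of $(iv)$). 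These are exactly conditions $(ii)$, $(iii)$, $(v)$ of Theorem~9.3 of \cite{OS} for $S$, so in each case $S$ is regular.

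\emph{Main obstacle.} The delicate point is that $\Psi$ is not in general compatible with the fuzzy product $\circ$: the supremum defining $(f\circ g)(a)$ may fail to be attained and $f\circ g$ vanishes on indecomposable elements, so $\mathfrak{S}(S,f)\cdot\mathfrak{S}(S,g)$ and $\mathfrak{S}(S,f\circ g)$ need not coincide. Both defects disappear precisely when $S$ is regular, which is why the reduction to regularity of $S\times I$ must be made before translating; the remaining work is the bookkeeping needed to verify that conditions (q-i)--(q-iv) survive when quasi-ideals of $S\times I$ are pushed back to fuzzy quasi-ideals of $S$ in $(iii)$(c) and $(iv)$.
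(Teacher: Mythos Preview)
Your overall strategy---transport to $S\times I$, invoke Theorem~9.3 of \cite{OS} there, and return via the bijections $\Psi$---is exactly the paper's strategy, and for the implications $(i)\Leftrightarrow(ii)$, $(i)\Rightarrow(iii)$, and all three reverse implications via characteristic functions and Lemma~\ref{comp}, your argument is essentially the paper's, only packaged more abstractly.

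The genuine gap is in $(i)\Rightarrow(iv)$. You assert that the product $\mathfrak{Q}(S,q_{1})\cdot\mathfrak{Q}(S,q_{2})$ ``again satisfies (q-i)--(q-iv)'' so that the bijection $\mathfrak{Q}(S)\leftrightarrow Q(S\times I)$ becomes a semigroup isomorphism for $\circ$. This fails: condition (q-ii), that the supremum $(q_{1}\circ q_{2})(a)=\sup_{a=bc}q_{1}(b)\wedge q_{2}(c)$ is attained, need not hold even when $S$ is regular. Your justification (``the factorization $a=(ax)a$ realizes the supremum'') works when $f$ is a fuzzy right ideal and $g$ a fuzzy left ideal, because then $f(ax)\wedge g(a)\geq f(a)\wedge g(a)\geq f(b)\wedge g(c)$ for every factorization $a=bc$; but for arbitrary fuzzy quasi-ideals no such monotonicity is available. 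A concrete counterexample: take the rectangular band $S=\mathbb{N}\times\mathbb{N}$ with $(m,n)(m',n')=(m,n')$, and set $q(m,n)=1-1/n$ if $m=1$ and $0$ otherwise, $q'(m,n)=1-1/m$ if $n=1$ and $0$ otherwise. Both are fuzzy quasi-ideals (neither a one-sided fuzzy ideal), $S$ is regular, yet $(q\circ q')(1,1)=\sup_{k,l}(1-1/k)\wedge(1-1/l)=1$ is not attained, so $\bigl((1,1),1\bigr)\notin\mathfrak{Q}(S,q)\cdot\mathfrak{Q}(S,q')$ and the product violates (q-ii). Hence $Q(S\times I)$ is not closed under multiplication and the bijection cannot be used to transport the semigroup structure.

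The paper avoids this by \emph{not} claiming that $\mathfrak{Q}(S,q_{1})\cdot\mathfrak{Q}(S,q_{2})$ lies in $Q(S\times I)$. Instead it uses only the fact that this product is a quasi-ideal of $S\times I$ (from Theorem~9.3 applied to the regular semigroup $S\times I$) and verifies the fuzzy quasi-ideal inequality for $q_{1}\circ q_{2}$ by an explicit element-wise computation: for each three-factor decomposition $a=b_{i}c_{i}d_{i}$ one exhibits the element $\bigl(a,\,q_{1}(b_{i})\wedge q_{2}(c_{i})\wedge q_{1}(c_{j})\wedge q_{2}(d_{j})\bigr)$ inside $\mathfrak{Q}(S,q_{1})\mathfrak{Q}(S,q_{2})(S\times I)\cap(S\times I)\mathfrak{Q}(S,q_{1})\mathfrak{Q}(S,q_{2})$, hence inside the product, and reads off the required bound on $(q_{1}\circ q_{2})(a)$. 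The identity $q\circ S\circ q=q$ is handled similarly. So the ``bookkeeping'' you defer is in fact the substantive step, and it cannot be done by invoking the bijection abstractly.
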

\begin{proof}
$(i) \Rightarrow (ii)$: The regularity of $S$ implies that of $S \times I$. Indeed, for every $(a,x) \in S \times I$, let $a' \in S$ such that $aa'a=a$. Then, $(a,x)(a',x)(a,x)=(a,x)$ proving that $(a,x)$ has an inverse. Being regular for $S \times I$ means that for every right ideal $R$ and every left ideal $L$ of $S \times I$, we have that $R \cap L=RL$. We can apply this by taking $R=\mathfrak{R}(S,f)=\Psi(f)$ the fuzzy right ideal of $S$ associated with an arbitrary fuzzy right ideal $f$, and $L=\mathfrak{L}(S,g)=\Psi(g)$ the fuzzy left ideal of $S$ associated with an arbitrary fuzzy left ideal $g$. In this particular case we have $\mathfrak{R}(S,f) \cap \mathfrak{L}(S,g)=\mathfrak{R}(S,f) \mathfrak{L}(S,g)$. Further, for every $a \in S$, we assume that $(f \cap g)(a)=f(a)$ which means that $f(a)=\text{min}(f(a),g(a))$. Under this assumption we will prove that 
$$(f \cap g)(a)=f(a)=\vee_{a=bc}(f(b) \wedge g(c))=(f \circ g)(a).$$
A similar proof can be provided in case $g(a)=\text{min}(f(a),g(a))$. Since $f(a) \leq g(a)$, then $(a,f(a)) \in \mathfrak{R}(S,f) \cap \mathfrak{L}(S,g)$ and $(a,f(a)) \in \mathfrak{R}(S,f) \mathfrak{L}(S,g)$. Hence, there are $(b,x) \in \mathfrak{R}(S,f)$ and $(c,y) \in \mathfrak{L}(S,g)$ such that $(a,f(a))=(b,x)(c,y)$. It follows that $a=bc$, and that
$$f(a)=x \wedge y \leq f(b) \wedge g(c).$$
But $f$ is a fuzzy right ideal, then $f(b) \leq f(bc)=f(a)$, hence
$$f(b) \leq f(a) \leq f(b) \wedge g(c) \leq f(b),$$
which implies that $f(a)=f(b)=f(b) \wedge g(c)$. Now we write $(f \circ g)(a)$ as
$$(f \circ g)(a)=(f(b) \wedge g(c)) \vee (\vee_{a=b'c', b' \neq b}(f(b') \wedge g(c'))),$$
and obtain from the above that 
$$(f \circ g)(a)=f(a) \vee (\vee_{a=b'c', b' \neq b}(f(b') \wedge g(c'))) \geq f(a).$$
But on the other hand we have that
\begin{align*}
	(f \circ g)(a)= \vee_{a=b''c''}(f(b'') \wedge g(c'')) & \leq \vee_{a=b''c''} f(b'')\\
	& \leq f(b''c'') && \text{(since $f$ is a fuzzy right ideal)}\\
	&= f(a).
\end{align*}
Therefore we finally have that $(f\circ g)(a)=f(a)=(f \cap g)(a)$.\\
$(ii) \Rightarrow (i)$: Let $R$ and $L$ be arbitrary right and left ideals of $S$ and $\chi_{R}$ and $\chi_{L}$ be their respective fuzzy right and left ideals. From the assumption we have that $\chi_{R} \cap \chi_{L}= \chi_{R} \circ \chi_{L}$. On the one hand, it is obvious that $\chi_{R} \cap \chi_{L}=\chi_{R \cap L}$, and on the other hand we have from lemma \ref{comp} that $\chi_{R} \circ \chi_{L}= \chi_{RL}$. Combining both equalities we derive that $R \cap L= RL$. Now theorem 9.3 \cite{OS} implies (i).\\
$(i) \Rightarrow (iii)$: Let $f$ be a fuzzy right ideal of $S$ and let $\mathfrak{R}(f,S)=\Psi(f)$ be the respective right ideal of $S \times I$. Since $S$ is regular, so is $S \times I$ and then (iii) of theorem 9.3 \cite{OS} implies that $\mathfrak{R}(f,S) \mathfrak{R}(f,S)=\mathfrak{R}(f,S)$. We imply from this that $f \circ f=f$. For every $a \in S$, $(a,f(a)) \in \mathfrak{R}(f,S)$, hence there are $(b,y), (c,z) \in \mathfrak{R}(f,S)$ such that 
$$(a,f(a))=(b,y) (c,z)=(bc, y \wedge z).$$
In particular we have that $a$ has a decomposition $a=bc$ and that $$f(a) = y \wedge z \leq f(b) \wedge f(c).$$
It follows from this that 
$$(f \circ f)(a) = (f(b) \wedge f(c)) \vee \left(\underset{\underset{(b',c') \neq (b,c)}{a=b'c'}}\vee f(b') \wedge f(c')\right) \geq f(a).$$
On the other hand we have that
\begin{align*}
(f \circ f)(a) &= \underset{a=b'c'}{\vee} f(b') \wedge f(c')\\
& \leq \underset{a=b'c'}{\vee} f(a) \wedge f(c') && \text{ (since $f$ is a fuzzy right ideal) } \\
& \leq f(a).
\end{align*}
Combining both inequalities we obtain that $(f \circ f)(a)=f(a)$.\\
In a similar fashion one can prove that for every fuzzy left ideal $g$ we have that $g \circ g=g$. Finally, that $f \circ g$ is a fuzzy quasi-ideal of $S$ for every fuzzy right ideal $f$ and every fuzzy left ideal $g$, follows from Lemma 2.6.5 \cite{FS} and from the equality $f \circ g=f \cap g$ which is a consequence of the equivalence $(i) \Leftrightarrow (ii)$.\\
$(iii) \Rightarrow (i)$: Under the assumptions that for every fuzzy right ideal $f$ and every fuzzy left ideal $g$, we have that $f \circ f=f$, $g \circ g=g$ and $f \circ g$ is a fuzzy quasi-ideal, we prove that for every right ideal $R$ of $S$ and every left ideal $L$ of $S$ we have that $RR=R$, $LL=L$ and that $RL$ is a quasi-ideal of $S$. These three conditions imply from theorem 9.3 of \cite{OS} that $S$ is regular. Consider now the fuzzy right ideal $\chi_{R}$ and the fuzzy left ideal $\chi_{L}$ for which we have that $\chi_{R} \circ \chi_{R}=\chi_{R}$, $\chi_{L} \circ \chi_{L}=\chi_{L}$ and that $\chi_{R} \circ \chi_{L}$ is a fuzzy quasi-ideal of $S$. Lemma \ref{comp} and the first two assumptions imply immediately that $RR=R$ and $LL=L$. Again lemma \ref{comp} implies that $\chi_{R} \circ \chi_{L}=\chi_{RL}$, which from our assumption is a fuzzy quasi-ideal of $S$. Then lemma 2.6.4 of \cite{FS} implies $RL$ is a quasi-ideal of $S$.\\
$(i) \Rightarrow (iv)$: Let $q_{1},q_{2} \in \mathcal{Q}(S)$ be arbitrary and $\mathfrak{Q}(q_{1},S)$, $\mathfrak{Q}(q_{2},S)$ be their corresponding quasi ideals of $S \times I$. We know from theorem 9.3 \cite{OS} that $\mathfrak{Q}(q_{1},S)\mathfrak{Q}(q_{2},S)$ is a quasi-ideal of $S \times I$ since $S \times I$ is regular. Let $a \in S$ be arbitrary and want to show that
$$((q_{1} \circ q_{2}) \circ S \wedge S \circ (q_{1} \circ q_{2})) (a) \leq (q_{1} \circ q_{2})(a).$$
Assume now that $a$ decomposes as $a=b_{i}c_{i}d_{i}$ where $i \in J$, so the above inequality now writes as
\begin{equation*} 
\underset{i \in J}{\vee}( q_{1}(b_{i}) \wedge q_{2}(c_{i})) \wedge \underset{i \in J}{\vee}( q_{1}(c_{i}) \wedge q_{2}(d_{i})) \leq (q_{1} \circ q_{2})(a),
\end{equation*} 
which is equivalent to
\begin{equation*} 
	\underset{(i,j) \in J\times J}{\vee}( q_{1}(b_{i}) \wedge q_{2}(c_{i}) \wedge  q_{1}(c_{j}) \wedge q_{2}(d_{j})) \leq (q_{1} \circ q_{2})(a).
\end{equation*} 
This follows if we prove that for every $(i,j) \in J \times J$ we have
\begin{equation} \label{2bp}
 q_{1}(b_{i}) \wedge q_{2}(c_{i}) \wedge  q_{1}(c_{j}) \wedge q_{2}(d_{j}) \leq (q_{1} \circ q_{2})(a).
\end{equation}
For this consider the element $(a, q_{1}(b_{i}) \wedge q_{2}(c_{i}) \wedge  q_{1}(c_{j}) \wedge q_{2}(d_{j})) \in S \times I$ for which we have
\begin{align*}
\mathfrak{Q}(q_{1},S) \mathfrak{Q}(q_{2},S) (S \times I) &\ni (b_{i},q_{1}(b_{i}))(c_{i},q_{2}(c_{i}))(d_{i}, q_{1}(c_{j}) \wedge q_{2}(d_{j}))\\
&=(a, q_{1}(b_{i}) \wedge q_{2}(c_{i}) \wedge  q_{1}(c_{j}) \wedge q_{2}(d_{j}))\\
&= (b_{j}, q_{1}(b_{i}) \wedge q_{2}(c_{i})) (c_{j}, q_{1}(c_{j})) (d_{j},q_{2}(d_{j}))\\
&\in (S \times I) \mathfrak{Q}(q_{1},S) \mathfrak{Q}(q_{2},S). 
\end{align*}
Since $\mathfrak{Q}(q_{1},S) \mathfrak{Q}(q_{2},S) (S \times I) \cap (S \times I) \mathfrak{Q}(q_{1},S) \mathfrak{Q}(q_{2},S) \subseteq \mathfrak{Q}(q_{1},S) \mathfrak{Q}(q_{2},S)$, then
$$(a, q_{1}(b_{i}) \wedge q_{2}(c_{i}) \wedge  q_{1}(c_{j}) \wedge q_{2}(d_{j})) \in \mathfrak{Q}(q_{1},S) \mathfrak{Q}(q_{2},S),$$
therefore there are $(s,x) \in \mathfrak{Q}(q_{1},S)$ and $(t,y) \in \mathfrak{Q}(q_{2},S)$ such that
\begin{equation} \label{key}
(a, q_{1}(b_{i}) \wedge q_{2}(c_{i}) \wedge  q_{1}(c_{j}) \wedge q_{2}(d_{j}))=(s,x)(t,y)=(st,x \wedge y).
\end{equation}
It follows that
\begin{align*}
(q_{1}\circ q_{2})(a) &= (q_{1}\circ q_{2})(st) && \text{(since $a=st$)}\\
&\geq q_{1}(s) \wedge q_{2}(t) && \text{(from the definition of $\circ$)} \\
&\geq x \wedge y && \text{(since $q_{1}(s) \geq x$ and $q_{2}(t) \geq y$)}\\
&= q_{1}(b_{i}) \wedge q_{2}(c_{i}) \wedge  q_{1}(c_{j}) \wedge q_{2}(d_{j}) && \text{(from (\ref{key})).}
\end{align*}
This proves (\ref{2bp}) and we are done with the first part of the proof. It remains to prove that $(\mathcal{Q}(S), \circ)$ is regular. For this we utilize again theorem 9.3 \cite{OS} where it is proved that for every quasi-ideal $Q$ of $S$ we have that $QSQ=Q$. Let $q \in \mathcal{Q}(S)$ and $\mathfrak{Q}(q,S)$ its corresponding quasi-ideal of $S \times I$ which satisfies
$$\mathfrak{Q}(q,S) (S \times I) \mathfrak{Q}(q,S)=\mathfrak{Q}(q,S).$$
We use this and the obvious fact that $S \in \mathcal{Q}(S)$ to show that $q \circ S \circ q=q$ which would prove that $q$ is regular. First we show that for every $a \in S$, $(q \circ S \circ q)(a) \leq q(a)$. Indeed, since
$$(q \circ S \circ q)(a)=\underset{a=bcd}{\vee}q(b) \wedge q(d),$$
to prove the inequality,  it is enough to show that $q(a) \geq q(b)$ or $q(a) \geq q(d)$ for every decomposition $a=bcd$. But either one or the other is insured from lemma \ref{lm} and then the result follows. Conversely, $(a,q(a)) \in \mathfrak{Q}(q,S)$, therefore there are $(b,x),(d,y) \in \mathfrak{Q}(q,S)$ and $(c,z) \in S \times I$ such that
$$(a,q(a))=(b,x)(c,z)(d,y)=(bcd,x \wedge z \wedge y).$$ 
From this it follows that 
\begin{align*}
q(a)& = x \wedge z \wedge y \\
& \leq q(b) \wedge q(d) && \text{(since $x \leq q(b)$ and $y \leq q(d)$)}\\
& \leq (q \circ S \circ q)(a) && \text{(since $a=bcd$,)}
\end{align*}
hence $q(a) \leq (q \circ S \circ q)(a)$ which concludes the proof. \\
$(iv) \Rightarrow (i)$: If we prove that for every quasi-ideal $Q$ of $S$ we have that $QSQ=Q$, then Theorem 9.3 of \cite{OS} implies that $S$ is regular. Consider $\chi_{Q}$ which from Lemma 2.6.4 of \cite{FS} is a fuzzy quasi-ideal of $S$ for which we have that 
$\chi_{Q}=\chi_{Q} \circ \chi_{S} \circ \chi_{Q}$. From Lemma \ref{comp} we can write the above as $\chi_{Q}=\chi_{QSQ}$ which implies that $Q=QSQ$ and we are done.
\end{proof}

\section{Semilattice decompositions}

Recall from \cite{Saito} that a semigroup $S$ is called left regular if for every $a \in S$, there is $x \in S$ such that $a=xa^{2}$. Likewise regular semigroups, left regular ones have the property that every element have nontrivial decompositions as a product of two elements. Most importantly, as shown in \cite{Saito}, the left regular semigroups whose every left ideal is two-sided, decompose as semilattices of left simple semigroups. We will generalize this by introducing here fuzzy semilattices of left simple fuzzy subsemigroups. Before we give our next definition, we recall that for every fuzzy subsemigroup $f$ of a given semigroup $S$ and every $t \in I$, there is the so called level subset (or the $t$-cut) $f_{t}=\{a \in S: f(a) \geq t\}$ associated with $t$. It is obvious that $f_{t}$ is a subsemigroup of $S$.
\begin{definition} \label{fsl}
	Let $(Y,\cdot)$ be a semilattice and $f_{\alpha}$ for $\alpha \in Y$ be a family of fuzzy subsemigroups of a semigroup $S$. We say that this family forms a fuzzy semilattice of fuzzy subsemigroups of $S$ if the following three conditions are satisfied.\begin{itemize}
		\item [(i)] $\forall a \in S$, and all $\alpha, \beta \in Y$ with $\alpha \neq \beta$, we have that $f_{\alpha}(a) \wedge f_{\beta}(a)=0$.
		\item[(ii)] For all $\alpha, \beta \in Y$, $f_{\alpha} \circ f_{\beta} \subseteq f_{\alpha \cdot \beta}$.
		\item[(iii)] $\forall (\alpha, t)  \in Y \times I^{\ast}$, $(f_{\alpha})_{t} \neq \emptyset$, and $\forall(a,t) \in S \times I^{\ast}$, $\exists \alpha \in Y$ such that $a \in (f_{\alpha})_{t}$.
	\end{itemize}
In such a case we say that $S$ is a fuzzy semilattice of fuzzy subsemigroups of $S$. We observe the following.
\begin{proposition} \label{sd-fsd}
	If a semigroup $S$ has a semilattice decomposition, then $S$ has a fuzzy semilattice decomposition.
\end{proposition}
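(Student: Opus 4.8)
The plan is to take the blocks of the given semilattice decomposition and replace each one by its characteristic function. Concretely, suppose $S=\bigcup_{\alpha\in Y}S_{\alpha}$ is a semilattice decomposition, so that $Y$ is a semilattice, each $S_{\alpha}$ is a nonempty subsemigroup of $S$, the blocks $S_{\alpha}$ are pairwise disjoint, and $S_{\alpha}S_{\beta}\subseteq S_{\alpha\beta}$ for all $\alpha,\beta\in Y$. I would set $f_{\alpha}=\chi_{S_{\alpha}}$ for each $\alpha\in Y$ and claim that the family $\{f_{\alpha}\}_{\alpha\in Y}$, indexed by the very same semilattice $Y$, is a fuzzy semilattice of fuzzy subsemigroups of $S$ in the sense of Definition \ref{fsl}. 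Each $\chi_{S_{\alpha}}$ is already known to be a fuzzy subsemigroup (as recalled in the introduction), so the whole task reduces to checking the three conditions (i)--(iii).

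First I would check (i): for $a\in S$ and $\alpha\neq\beta$, disjointness of the blocks forces at least one of $\chi_{S_{\alpha}}(a),\chi_{S_{\beta}}(a)$ to vanish, whence $f_{\alpha}(a)\wedge f_{\beta}(a)=0$. For (ii), I would invoke Lemma \ref{comp} with $B=S_{\alpha}$ and $C=S_{\beta}$ to obtain $f_{\alpha}\circ f_{\beta}=\chi_{S_{\alpha}}\circ\chi_{S_{\beta}}=\chi_{S_{\alpha}S_{\beta}}$, and then use the inclusion $S_{\alpha}S_{\beta}\subseteq S_{\alpha\beta}$ to conclude $\chi_{S_{\alpha}S_{\beta}}\leq\chi_{S_{\alpha\beta}}$ pointwise, i.e. $f_{\alpha}\circ f_{\beta}\subseteq f_{\alpha\cdot\beta}$. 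For (iii), since $\chi_{S_{\alpha}}$ takes only the values $0$ and $1$ and $t\in I^{\ast}$ is strictly positive, the level set $(f_{\alpha})_{t}=\{a\in S:\chi_{S_{\alpha}}(a)\geq t\}$ is exactly $S_{\alpha}$, which is nonempty; and for any $(a,t)\in S\times I^{\ast}$, the block of the partition containing $a$ supplies the required $\alpha\in Y$ with $a\in S_{\alpha}=(f_{\alpha})_{t}$. This verifies all three conditions.

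I do not anticipate a genuine obstacle: the proposition is essentially a dictionary translation of the partition and multiplication axioms for the $S_{\alpha}$ into the language of characteristic functions, and the only nontrivial input is the composition formula $\chi_{B}\circ\chi_{C}=\chi_{BC}$ furnished by Lemma \ref{comp}. The single point requiring care is bookkeeping: one must index the fuzzy family by the same semilattice $Y$, and one must take the blocks $S_{\alpha}$ to be nonempty so that the first clause of (iii) holds --- this is harmless, since the blocks of a semilattice decomposition are nonempty by convention.
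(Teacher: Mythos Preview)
Your proposal is correct and follows essentially the same approach as the paper: both take $f_{\alpha}=\chi_{S_{\alpha}}$, verify (i) via disjointness of the blocks, (ii) via Lemma \ref{comp} together with $S_{\alpha}S_{\beta}\subseteq S_{\alpha\cdot\beta}$, and (iii) via $(\chi_{S_{\alpha}})_{t}=S_{\alpha}$ for $t\in I^{\ast}$. There is no substantive difference between the two arguments.
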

\begin{proof}
	Assume that $S$ is a semilattice of semigroups $S_{\alpha}$ with $\alpha \in Y$ where $Y$ is a semilattice. Let $\chi_{S_{\alpha}}$ be the corresponding fuzzy subsemigroups of $S$. We show that the family $\chi_{S_{\alpha}}$ with $\alpha \in Y$ forms a fuzzy semilattice of left simple fuzzy subsemigroups of $S$. Let $\alpha \neq \beta$ be elements of $Y$ and $a \in S$ be arbitrary. If $a \notin S_{\alpha} \cup S_{\beta}$, then $\chi_{S_{\alpha}}(a) \wedge \chi_{S_{\beta}}(a)=0$. If $a \in S_{\alpha}$, then $\chi_{S_{\alpha}}(a) \wedge \chi_{S_{\beta}}(a)=0$, and similarly, if $a \in S_{\beta}$, then $\chi_{S_{\alpha}}(a) \wedge \chi_{S_{\beta}}(a)=0$. Secondly, for all $\alpha, \beta \in Y$, we have that 
	\begin{align*}
		\chi_{S_{\alpha}} \circ \chi_{S_{\beta}}&=\chi_{S_{\alpha}S_{\beta}} && \text{(from Lemma \ref{comp})} \\
		&\subseteq \chi_{S_{\alpha \cdot \beta}} && \text{(since $S_{\alpha}S_{\beta} \subseteq S_{\alpha \cdot \beta}$).}
	\end{align*}
	Thirdly, $\forall (\alpha, t)  \in Y \times I^{\ast}$, $(\chi_{S_{\alpha}})_{t} \neq \emptyset$ since for $t\neq 0$, $(\chi_{S_{\alpha}})_{t} =S_{\alpha}$. Also $\forall(a,t) \in S \times I^{\ast}$, $\exists \alpha \in Y$ such that $a \in (\chi_{S_{\alpha}})_{t}$. We can chose $\alpha \in Y$ such that $a \in S_{\alpha}$, and then $a \in S_{\alpha}= (\chi_{S_{\alpha}})_{t}$.
\end{proof}
\end{definition}
\begin{proposition} \label{f2s}
	If $S$ is a fuzzy semilattice of fuzzy subsemigroups of $S$, then $S \times I^{\ast}$ has a semilattice decomposition.
\end{proposition}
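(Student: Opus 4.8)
The plan is to exhibit the semilattice decomposition of $S \times I^{\ast}$ explicitly, using the given semilattice $Y$ as the index set. For each $\alpha \in Y$ I would put
$$T_{\alpha}=\{(a,t) \in S \times I^{\ast}: f_{\alpha}(a) \geq t\}=\mathfrak{S}(S,f_{\alpha}) \cap (S \times I^{\ast}).$$
First I would record that $S \times I^{\ast}$ is itself a subsemigroup of $S \times I$ (since $I^{\ast}$ is closed under $\wedge$), and that each $T_{\alpha}$ is a subsemigroup of it: if $(a,t),(b,s) \in T_{\alpha}$, then $f_{\alpha}(ab) \geq f_{\alpha}(a) \wedge f_{\alpha}(b) \geq t \wedge s$, so $(ab,t \wedge s) \in T_{\alpha}$. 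Moreover each $T_{\alpha}$ is nonempty: by the first half of condition (iii) of Definition \ref{fsl}, fixing any $t \in I^{\ast}$ we have $(f_{\alpha})_{t} \neq \emptyset$, and any $a \in (f_{\alpha})_{t}$ gives $(a,t) \in T_{\alpha}$.

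Next I would verify the three defining properties of a semilattice decomposition. For disjointness: if $(a,t) \in T_{\alpha} \cap T_{\beta}$ with $\alpha \neq \beta$, then $f_{\alpha}(a) \wedge f_{\beta}(a) \geq t > 0$, contradicting condition (i); hence the $T_{\alpha}$ are pairwise disjoint. For covering: given $(a,t) \in S \times I^{\ast}$, the second half of condition (iii) supplies $\alpha \in Y$ with $a \in (f_{\alpha})_{t}$, i.e. $(a,t) \in T_{\alpha}$, so $S \times I^{\ast}=\bigcup_{\alpha \in Y}T_{\alpha}$. For multiplicativity: given $(a,t) \in T_{\alpha}$ and $(b,s) \in T_{\beta}$, I must show $(ab,t \wedge s) \in T_{\alpha \cdot \beta}$, that is $f_{\alpha \cdot \beta}(ab) \geq t \wedge s$.

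This last point is where condition (ii) enters. Since $ab=a\cdot b$ is a decomposition of $ab$ in $S$, the definition of $\circ$ gives
$$(f_{\alpha} \circ f_{\beta})(ab) \geq f_{\alpha}(a) \wedge f_{\beta}(b) \geq t \wedge s,$$
and then $f_{\alpha \cdot \beta}(ab) \geq (f_{\alpha} \circ f_{\beta})(ab) \geq t \wedge s$ because $f_{\alpha} \circ f_{\beta} \subseteq f_{\alpha \cdot \beta}$. Hence $(ab,t \wedge s)=(a,t)(b,s) \in T_{\alpha \cdot \beta}$, so $T_{\alpha}T_{\beta} \subseteq T_{\alpha \cdot \beta}$. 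Combining the three properties, $\{T_{\alpha}\}_{\alpha \in Y}$ is a partition of $S \times I^{\ast}$ into nonempty subsemigroups compatibly indexed by the semilattice $Y$, which is precisely a semilattice decomposition of $S \times I^{\ast}$. There is no serious obstacle; the only step requiring a moment's care is the reduction of multiplicativity to condition (ii), where one must notice that the factorization $ab=a\cdot b$ is always available (the $f_{\alpha}$ live on the semigroup $S$), so the supremum defining $(f_{\alpha}\circ f_{\beta})(ab)$ is over a nonempty set and dominates the single term $f_{\alpha}(a)\wedge f_{\beta}(b)$.
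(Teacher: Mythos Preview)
Your proof is correct, but it produces a different (coarser) semilattice decomposition than the one in the paper. You index by the given semilattice $Y$ and take $T_{\alpha}=\{(a,t)\in S\times I^{\ast}: f_{\alpha}(a)\geq t\}$; the paper instead indexes by the product semilattice $Y\times I^{\ast}$ and takes the finer pieces $f_{(\alpha,t)}=(f_{\alpha})_{t}\times\{t\}$, so that your $T_{\alpha}$ is exactly their $\mathfrak{S}^{\ast}(f_{\alpha},S)=\bigcup_{t\in I^{\ast}}f_{(\alpha,t)}$. The verifications of disjointness, covering and multiplicativity run along the same lines in both arguments, using conditions (i), (iii) and (ii) of Definition~\ref{fsl} respectively; your version is slightly shorter because it avoids the extra bookkeeping of the $t$-coordinate. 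The trade-off is that the paper's finer decomposition is the one actually used downstream: in Theorem~\ref{tsaito+fuzzy} and its completely simple analogue, one needs each component to be left simple (respectively completely simple), and this holds for $f_{(\alpha,t)}\cong(f_{\alpha})_{t}$ by the definition of a left simple fuzzy subsemigroup, whereas your $T_{\alpha}$ is generally not left simple (elements with larger second coordinate cannot be reached from those with smaller second coordinate). So both approaches prove the proposition as stated, but only the paper's decomposition feeds directly into the later applications.
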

\begin{proof}
	Assume that $S$ is a fuzzy semilattice of fuzzy subsemigroups $f_{\alpha}$ with $\alpha \in Y$. We define for each $\alpha \in Y$ the set
	$$\mathfrak{S}^{\ast}(f_{\alpha},S)=\{(a,t) \in S \times I^{\ast}: f_{\alpha}(a) \geq t\}.$$
	This set is nonempty from the assumption (iii) on $f_{\alpha}$, and clearly forms a subsemigroup of $S \times I^{\ast}$. For every fixed pair $(\alpha,t) \in Y \times I^{\ast}$ we consider the nonempty set
	$$(f_{\alpha})_{t}=\{a \in S: f_{\alpha}(a) \geq t\}=\{a \in S: (a,t) \in \mathfrak{S}^{\ast}(f_{\alpha},S)\},$$
and let
	$$f_{(\alpha,t)}= (f_{\alpha})_{t} \times \{t\}.$$
	It is straightforward that $f_{(\alpha,t)}$ is a subsemigroup of $S \times I^{\ast}$. We will prove that the family of all $f_{(\alpha,t)}$ forms a semilattice decomposition of subsemigroups for $S \times I^{\ast}$, where the index semilattice is $(Y,\cdot) \times (I^{\ast}, \wedge)$. We observe that for each $\alpha \in Y$, 
	\begin{equation} \label{prelu}
		\mathfrak{S}^{\ast}(f_{\alpha},S)=\underset{t \in I^{\ast}}{\cup} f_{(\alpha,t)}.
	\end{equation}
	With this observation in mind we prove that the family of all $f_{(\alpha,t)}$ with $(\alpha,t) \in Y \times I^{\ast}$, are pairwise disjoint. Indeed, for every fixed $\alpha \in Y$, two different $f_{(\alpha,t)},f_{(\alpha,t')}$ do not intersect since $t \neq t'$. To prove that for all $\alpha \neq \beta$ and $t,t' \in I^{\ast}$, $f_{(\alpha,t)} \cap f_{(\beta,t')}=\emptyset$ it is enough to prove that $\mathfrak{S}^{\ast}(f_{\alpha},S) \cap \mathfrak{S}^{\ast}(f_{\beta},S) =\emptyset$. Let for this $(a,t) \in \mathfrak{S}^{\ast}(f_{\alpha},S) \cap \mathfrak{S}^{\ast}(f_{\beta},S)$, then 
	$$f_{\alpha}(a) \geq t \text{ and } f_{\beta}(a) \geq t,$$
	which is impossible since one of $f_{\alpha}(a)$ or $f_{\beta}(a)$ has to be zero from definition \ref{fsl}, (i). So it remains that $\mathfrak{S}^{\ast}(f_{\alpha},S) \cap \mathfrak{S}^{\ast}(f_{\beta},S) =\emptyset$. Let us prove that for every $(\alpha,t), (\beta,t') \in Y \times I^{\ast}$ we have that $f_{(\alpha,t)} f_{(\beta,t')} \subseteq f_{(\alpha \cdot \beta,t \wedge t')}$. Indeed, for every $(a,t) \in f_{(\alpha,t)}$ and $(b,t') \in f_{(\beta,t')}$ we have that
	\begin{align*}
		f_{\alpha \cdot \beta}(ab)& \geq (f_{\alpha} \circ f_{\beta})(ab) && \text{(from definition \ref{fsl}, (ii))}\\
		& = \underset{a'b'=ab}{\vee}f_{\alpha}(a')\wedge f_{\beta}(b')\\
		& \geq f_{\alpha}(a)\wedge f_{\beta}(b)\\
		& \geq t \wedge t',
	\end{align*}
	which implies that 
	$$(a,t)(b,t')=(ab,t \wedge t') \in f_{(\alpha \cdot \beta,t \wedge t')}.$$
	Finally, that $\underset{(\alpha, t) \in Y \times I^{\ast}}{\cup}f_{(\alpha,t)}=S \times I^{\ast}$ follows directly from definition \ref{fsl}, (iii). 
\end{proof}
\begin{definition} \label{flsimp}
	A fuzzy subsemigroup $f$ of a semigroup $S$ is called a left simple fuzzy subsemigroup of $S$ if for every $t \in I^{\ast}$, the set $f_{t}=\{a \in S: f(a) \geq t\}$ is nonempty and forms a left simple subsemigroup of $S$.
\end{definition}
This definition is consistent with the definition of a left simple subsemigroup $A$ of $S$. Indeed, in that case the property of the fuzzy subsemigroup $\chi_{A}$ that $(\chi_{A})_{t}=\{a \in S: \chi_{A}(a) \geq t\}$ is left simple for all $t \in I^{\ast}$ means exactly that $A$ is a left simple subsemigroup of $S$ since for $t \in I^{\ast}$, $(\chi_{A})_{t}=A$.
\begin{theorem} \label{tsaito+fuzzy}
	For a semigroup $(S, \cdot)$ the following conditions
	are equivalent:
	\begin{itemize}
		\item[(1')] $S$ is a fuzzy semilattice of left simple fuzzy subsemigroups of $S$.
		\item[(1)] $S$ is a semilattice of left simple semigroups.
	\end{itemize}
\end{theorem}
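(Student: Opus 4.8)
The plan is to prove the two implications separately: $(1)\Rightarrow(1')$ is essentially Proposition \ref{sd-fsd} together with the remark following Definition \ref{flsimp}, whereas $(1')\Rightarrow(1)$ will follow by feeding the semilattice decomposition of $S\times I^{\ast}$ furnished by Proposition \ref{f2s} into Theorem \ref{tsaito} and then transferring the resulting structural properties back to $S$. For $(1)\Rightarrow(1')$, suppose $S=\bigcup_{\alpha\in Y}S_{\alpha}$ is a semilattice of left simple semigroups. Proposition \ref{sd-fsd} shows that the family $(\chi_{S_{\alpha}})_{\alpha\in Y}$ satisfies conditions (i)--(iii) of Definition \ref{fsl}, so it only remains to observe that each $\chi_{S_{\alpha}}$ is a \emph{left simple} fuzzy subsemigroup; but for $t\in I^{\ast}$ we have $(\chi_{S_{\alpha}})_{t}=S_{\alpha}$, which is left simple by hypothesis, exactly the consistency remark recorded after Definition \ref{flsimp}.

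For $(1')\Rightarrow(1)$, assume $S$ is a fuzzy semilattice of left simple fuzzy subsemigroups $f_{\alpha}$, $\alpha\in Y$. Since $I^{\ast}=(0,1]$ is closed under $\wedge$, the set $S\times I^{\ast}$ is a genuine semigroup, and by Proposition \ref{f2s} it is a semilattice, indexed by $(Y,\cdot)\times(I^{\ast},\wedge)$, of the subsemigroups $f_{(\alpha,t)}=(f_{\alpha})_{t}\times\{t\}$. Because the second coordinate is constant on $f_{(\alpha,t)}$, the first projection restricts to an isomorphism $f_{(\alpha,t)}\cong(f_{\alpha})_{t}$, and the latter is left simple by the hypothesis $(1')$; hence $S\times I^{\ast}$ is a semilattice of left simple semigroups. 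Applying Theorem \ref{tsaito} (condition $(4)$) to $S\times I^{\ast}$, it is left regular and every one of its left ideals is two-sided. Now these two properties are pushed down along the surjective homomorphism $\pi_{1}\colon S\times I^{\ast}\to S$: given $a\in S$, fixing any $t\in I^{\ast}$ and reading off $a=xa^{2}$ from $(a,t)=(x,s)(a,t)^{2}=(xa^{2},s\wedge t)$ shows $S$ is left regular; and if $L$ is a left ideal of $S$, then $L\times I^{\ast}$ is a left ideal of $S\times I^{\ast}$, hence also a right ideal, so applying $\pi_{1}$ gives that $L$ is a right ideal of $S$. By Theorem \ref{tsaito} once more (now $(4)\Rightarrow(1)$), $S$ is a semilattice of left simple semigroups.

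The step I expect to be the crux is this final descent. A semilattice decomposition of $S\times I^{\ast}$ does not obviously restrict to one of $S$, since its index set is enlarged by the factor $I^{\ast}$ and one cannot simply discard the second coordinate at the level of decompositions. Routing the argument through the intrinsic characterization of Theorem \ref{tsaito} --- ``left regular, and every left ideal two-sided'' --- is what makes the descent work, because both conditions are visibly inherited by homomorphic images (equivalently, the class of semilattices of left simple semigroups is closed under homomorphic images). A minor point to dispatch along the way is that each component $f_{(\alpha,t)}$ really is left simple, which is where the extra hypothesis in $(1')$ --- not present in Proposition \ref{f2s} --- is used.
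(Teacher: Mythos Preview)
Your proof is correct and follows essentially the same route as the paper: both directions invoke Proposition \ref{sd-fsd} (resp.\ Proposition \ref{f2s}) to pass to a (fuzzy) semilattice decomposition, verify left simplicity of the components via Definition \ref{flsimp}, and for $(1')\Rightarrow(1)$ appeal to Theorem \ref{tsaito} on $S\times I^{\ast}$ before descending to $S$ through left regularity and the two-sidedness of left ideals. The only cosmetic difference is that the paper uses $L\times(0,t]$ rather than your $L\times I^{\ast}$ as the test left ideal in $S\times I^{\ast}$; either choice works.
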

\begin{proof}
	We begin by proving that $(1') \Rightarrow (1)$. Assumption (1') and Proposition \ref{f2s} imply that there is a semilattice decomposition of $S \times I^{\ast}$ with components $f_{(\alpha, t)}=(f_{\alpha})_{t} \times \{t\}$ where $(\alpha,t) \in Y \times I^{\ast}$ and $Y$ is a semilattice.  Further we show that each $f_{(\alpha,t)}$ is a left simple subsemigroup of $S \times I^{\ast}$. Indeed, since $f_{(\alpha,t)}=(f_{\alpha})_{t} \times \{t\}$ and 
	$$(f_{\alpha})_{t}=\{a \in S: (a,t) \in \mathfrak{S}^{\ast}(f_{\alpha},S)\}=\{a \in S: f_{\alpha}(a) \geq t\},$$
	we have from our assumption and definition \ref{flsimp} that $f_{(\alpha,t)}$ is left simple. Saito's theorem now implies that $S \times I^{\ast}$ is left regular and that every left ideal there is two sided. The first implies that $S$ is also left regular, and the second implies that every left ideal of $S$ is two sided. To see the second one, we let $L$ be a left ideal of $S$, and $t \in I^{\ast}$, then clearly $L \times (0,t]$ is a left ideal of $S\times I^{\ast}$ and hence a right ideal. Let now $s \in S$ and $a \in L$ be arbitrary. Then for every $(a,t') \in L \times (0,t]$ and $(s,u) \in S \times I^{\ast}$ we have that $(as,t' \wedge u)=(a,t')(s,u) \in L \times (0,t]$, consequently $as \in L$ which proves that $L$ is a right ideal of $S$.

	Let us now prove the converse implication $(1) \Rightarrow (1')$. Assume that $S$ is a semilattice of left simple semigroups $S_{\alpha}$ with $\alpha \in Y$ where $Y$ is a semilattice. From Proposition \ref{sd-fsd}, we have that the family $\chi_{S_{\alpha}}$ with $\alpha \in Y$ forms a fuzzy semilattice of fuzzy subsemigroups of $S$. In addition, each $\chi_{S_{\alpha}}$ is a left simple fuzzy subsemigroup. This is obvious from the comment after definition \ref{flsimp}. 
\end{proof} \newline

Theorem 4.1.3 of \cite{JMH} states that \textit{Every completely regular semigroup is a semilattice of completely simple semigroups}. Our next attempt will be to generalize this by replacing the semilattice of completely simple semigroups with fuzzy semilattice of completely simple fuzzy subsemigroups. We make first the following definition.
\begin{definition} \label{fcs}
	A fuzzy subsemigroup $f$ of a semigroup $S$ is called a completely simple fuzzy subsemigroup of $S$ if for each $t \in I^{\ast}$, the set $f_{t}=\{a \in S: f(a) \geq t\}$ is a nonempty and forms a completely simple subsemigroup of $S$.
\end{definition}
As in the case of left simple fuzzy subsemigroups, the above definition generalizes completely simple subsemigroups of $S$, for if $A$ is such one, then $\chi_{A}$ has the property that for each $t \in I^{\ast}$, $(\chi_{A})_{t}=\{a \in S: \chi_{A}(a) \geq t\}=A$ is a completely simple subsemigroup.

\begin{theorem}
	Every completely regular semigroup $S$ is a fuzzy semilattice of completely simple fuzzy subsemigroups.
\end{theorem}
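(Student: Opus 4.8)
The plan is to mirror the implication $(1) \Rightarrow (1')$ of Theorem \ref{tsaito+fuzzy}, with ``left simple'' everywhere replaced by ``completely simple''. First I would invoke Theorem 4.1.3 of \cite{JMH} to write the completely regular semigroup $S$ as a semilattice $Y$ of completely simple semigroups $S_{\alpha}$, $\alpha \in Y$. This is a semilattice decomposition of $S$ in the ordinary sense, so Proposition \ref{sd-fsd} applies verbatim and produces the family $\{\chi_{S_{\alpha}} : \alpha \in Y\}$ of characteristic fuzzy subsemigroups, which satisfies conditions (i), (ii), (iii) of Definition \ref{fsl}; that is, $S$ is a fuzzy semilattice of fuzzy subsemigroups of $S$ indexed by $Y$.

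It then remains only to upgrade the adjective, i.e.\ to check that each $\chi_{S_{\alpha}}$ is a \emph{completely simple} fuzzy subsemigroup in the sense of Definition \ref{fcs}. This is exactly the content of the remark immediately following Definition \ref{fcs}: for any $t \in I^{\ast}$ one has $(\chi_{S_{\alpha}})_{t} = \{a \in S : \chi_{S_{\alpha}}(a) \geq t\} = S_{\alpha}$, which is nonempty and, by hypothesis, completely simple. Hence every nonzero level set of $\chi_{S_{\alpha}}$ is completely simple, so $\chi_{S_{\alpha}}$ is a completely simple fuzzy subsemigroup, and therefore $S$ is a fuzzy semilattice of completely simple fuzzy subsemigroups, as required.

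Since both ingredients---Proposition \ref{sd-fsd} and the remark after Definition \ref{fcs}---are already in place, there is no genuine obstacle here; the only point needing a word of care is that Proposition \ref{sd-fsd} is stated for arbitrary semilattice decompositions, so one just has to observe that the decomposition of $S$ supplied by \cite{JMH} is of that type. (If one wished to record the converse as well, that a fuzzy semilattice of completely simple fuzzy subsemigroups is completely regular, the natural route would be Proposition \ref{f2s}: it decomposes $S \times I^{\ast}$ as a semilattice of the pieces $f_{(\alpha,t)} = (f_{\alpha})_{t} \times \{t\}$, each a direct product of a completely simple semigroup with a one-element semigroup and hence completely simple; thus $S \times I^{\ast}$ is completely regular, and $S$ inherits complete regularity as the homomorphic image of $S \times I^{\ast}$ under the first projection. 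This direction is not claimed in the statement above.)
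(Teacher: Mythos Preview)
Your proof is correct and follows exactly the route the paper takes for the direction actually asserted: invoke Theorem 4.1.3 of \cite{JMH} to get the ordinary semilattice decomposition $S=\bigcup_{\alpha\in Y}S_{\alpha}$ into completely simple pieces, apply Proposition \ref{sd-fsd} to obtain the fuzzy semilattice $\{\chi_{S_{\alpha}}\}$, and then use the remark after Definition \ref{fcs} to upgrade each $\chi_{S_{\alpha}}$ to a completely simple fuzzy subsemigroup. The only difference is one of packaging: the paper in fact proves the full equivalence ``$S$ is a fuzzy semilattice of completely simple fuzzy subsemigroups $\Leftrightarrow$ $S$ is a semilattice of completely simple subsemigroups'' and then appeals to \cite{JMH}, so it also writes out the converse implication via Proposition \ref{f2s}; your parenthetical sketch of that converse (passing to $S\times I^{\ast}$, observing each $f_{(\alpha,t)}$ is completely simple, hence $S\times I^{\ast}$ is completely regular, and projecting) is precisely the paper's argument for that extra direction.
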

\begin{proof}
	If we prove that $S$ being a fuzzy semilattice of completely simple fuzzy subsemigroups is equivalent to $S$ being a semilattice of completely simple subsemigroups, then our claim follows from Theorem 4.1.3 of \cite{JMH}. Assume that there is a semilattice $Y$ and a family $f_{\alpha}$ with $\alpha \in Y$, of completely simple fuzzy subsemigroups of $S$ which form a fuzzy semilattice decomposition for $S$. We first show that $S \times I^{\ast}$ has a semilattice decomposition of completely simple subsemigroups. Indeed, from Proposition \ref{f2s} we have that the family $f_{(\alpha,t)}=(f_{\alpha})_{t} \times \{t\}$ with $(\alpha, t) \in Y \times I^{\ast}$ is a semilattice decomposition of $S \times I^{\ast}$. Our assumption together with definition \ref{fcs} imply that each $f_{(\alpha,t)}$ is a completely simple subsemigroup of $S \times I^{\ast}$. Secondly, Proposition 4.1.2 of \cite{JMH} implies that each $f_{(\alpha,t)}$ is completely regular.
	Thirdly, let $a \in S$ be arbitrary. For every $t \in I^{\ast}$, from our assumption of the family $f_{\alpha}$, there is $\alpha \in Y$ such that $(a,t) \in f_{(\alpha,t)}$. Since $f_{(\alpha,t)}$ is completely regular, it follows that $(f_{\alpha})_{t}$ is completely regular as well, therefore there is a subgroup of $(f_{\alpha})_{t}$ containing $a$. This proves that $S$ is completely regular. Theorem 4.1.3 of \cite{JMH} implies that $S$ is a semilattice of completely simple subsemigroups. For the converse, assume that $S$ is a semilattice $Y$ of completely simple subsemigroups $S_{\alpha}$ with $\alpha \in Y$. Consider the family of fuzzy subsemigroups $\chi_{S_{\alpha}}$ with $\alpha \in Y$. From Proposition \ref{sd-fsd} we have that this family forms a fuzzy semilattice of fuzzy subsemigroups of $S$. The comment after definition \ref{fcs} implies that each $\chi_{S_{\alpha}}$ is fuzzy completely simple.
\end{proof}

\end{document}